\newtheorem{theorem}{Theorem}
\newtheorem{lemma}{Lemma}
\newtheorem{definition}{Definition}
\newtheorem{corollary}{Corollary}
\begin{document}
{\selectlanguage{english}
\binoppenalty = 10000 %
\relpenalty   = 10000 %

\pagestyle{headings} \makeatletter
\renewcommand{\@evenhead}{\raisebox{0pt}[\headheight][0pt]{\vbox{\hbox to\textwidth{\thepage\hfill \strut {\small Grigory. K. Olkhovikov}}\hrule}}}
\renewcommand{\@oddhead}{\raisebox{0pt}[\headheight][0pt]{\vbox{\hbox to\textwidth{{Characterization of predicate intuitionistic formulas}\hfill \strut\thepage}\hrule}}}
\makeatother

\title{Model theoretic-characterization of predicate intuitionistic formulas}
\author{Grigory K. Olkhovikov\\ Department of Ontology and Cognition Theory\\ Ural Federal University\\
Fulbright Visiting Scholar at the Philosophy Dept,\\Stanford University \\Bldg 90, Stanford, CA, USA}
\date{}
\maketitle
\begin{quote}
{\bf Abstract.} Notions of asimulation and $k$-asimulation
introduced in \cite{Ol} are extended onto the level of predicate
logic. We then prove that a first-order formula is equivalent to a
standard translation of an intuitionistic predicate formula iff it
is invariant with respect to $k$-asimulations for some $k$, and
then that a first-order formula is equivalent to a standard
translation of an intuitionistic predicate formula iff it is
invariant with respect to asimulations. Finally, it is proved that
a first-order formula is equivalent to a standard translation of
an intuitionistic predicate formula over a class of intuitionistic
models (intuitionistic models with constant domain) iff it is
invariant with respect to asimulations between intuitionistic
models (intuitionistic models with constant domain).
\end{quote}

Van Benthem's well-known  modal characterization theorem shows
that expressive power of modal propositional logic as a fragment
of first-order logic can be described via the notion of
bisimulation invariance. Moreover, it is known that modal
predicate logic, initially considered as an extension of
first-order logic, can also be viewed as its fragment, although
somewhat bigger than the fragment induced by propositional modal
logic. Expressive power of modal predicate logic, from this
vantage point, is described by the notion of world-object
bisimulation which appears to be a rather direct combination of
bisimulation and partial isomorphism (see, e.\,g. \cite[p. 124,
Theorem 21]{vB}).

Although  intuitionistic logic has been treated as a fragment of
modal logic for quite a long while, results analogous to
propositional and predicate version of Van Benthem's modal
characterization theorem were not obtained for it until recently.
In \cite{Ol} we filled this gap for intuitionistic propositional
logic. In this paper we introduced the notion of asimulation and
its parametrized version, $k$-asimulation, and showed that they
can be used to characterize expressive power of intuitionistic
propositional logic in much the same way bisimulation and
$k$-bisimulation are used to characterize modal propositional
logic. In this paper we do the same job for intuitionistic
predicate logic without identity.

The layout of the paper is as follows. Starting from some
notational conventions and preliminary remarks in section
\ref{S:Prel}, we then define a predicate version of
$k$-asimulation and move on to the proof of a `parametrized'
version of model-theoretic characterization of intuitionistic
predicate logic in section \ref{S:Param}. Then, in section
\ref{S:Main}, we introduce the predicate version of asimulation
and prove the full unparametrized counterpart to Theorem 21 of
\cite{vB}. In section \ref{S:Rest} we discuss possibilities of
restriction of the latter result to special subclasses of
first-order models and the final sections contains some
conclusions, and mentions possible directions of further research.

\section{Preliminaries}\label{S:Prel}

We take $\mathbb{N}$ to be the set of natural numbers
\emph{without} $0$. A formula is a formula of classical predicate
logic with identity whose predicate letters are in a vocabulary
$\Sigma = \{\,R^2,E^2\,\} \cup \{\,P^n_m \mid n,m
\in\mathbb{N}\,\}$, where the upper subscript denotes the arity of
the letter, so $0$-ary predicate letters or propositional letters
are not allowed. We refer to formulas with Greek letters distinct
from $\alpha$ and $\beta$, and to sets of formulas with upper-case
Greek letters distinct from $\Sigma$ and $\Theta$. We refer to
variables with letters $w, x, y, z$, sometimes using primes or
subscripts. If $\varphi$ is a formula, then we associate with it
the following finite vocabulary $\Sigma_\varphi \subseteq \Sigma$
such that $\Sigma_\varphi = \{\,R^2, E^2\,\} \cup \{\,P^j_i \mid
P^j_i \text{ occurs in }\varphi\,\}$. More generally, we refer
with $\Theta$ to an arbitrary subset of $\Sigma$ such that $R^2,
E^2 \in \Theta$. If $\psi$ is a formula and every predicate letter
occurring in $\psi$ is in $\Theta$, then we call $\psi$ a
$\Theta$-formula.

We refer to sequence $x_1,\dots, x_n$ of any objects as
$\bar{x}_n$. We denote ordered pair of ordered $n$-tuple
$(\bar{x}_n)$ and ordered $m$-tuple $(\bar{y}_m)$ by
$(\bar{x}_n;\bar{y}_m)$. We identify ordered $1$-tuple with its
only member. We denote the ordered $0$-tuple by $\Lambda$. If all
free variables of a formula $\varphi$ (set of formulas $\Gamma$)
are among $\bar{x}_n$, we write $\varphi(\bar{x}_n)$
($\Gamma(\bar{x}_n)$).

For a binary relation $S$ and any objects $s, t$ we abbreviate the
fact that $sSt \wedge tSs$ by $s\hat{S}t$.

We will denote models of classical predicate logic by letters $M$,
$N$ or $\alpha, \beta$. We refer to the domain of a model $M$ by
$D(M)$. For $n \geq 0$ by an $n$-ary evaluation $\Theta$-point we
mean a sequence $(M, a, \bar{b}_n)$ such that $M$ is a
$\Theta$-model and $(a,\bar{b}_n)$ is a sequence of elements of
$D(M)$. If $(M, a, \bar{b}_n)$ is an $n$-ary evaluation point then
we say that $\varphi(x, \bar{w}_n)$ is true at $(M, a, \bar{b}_n)$
and write $M, a, \bar{b}_n \models \varphi(x, \bar{w}_n)$ iff for
any variable assignment $f$ in $M$ such that $f(x) = a$, $f(w_i) =
b_i$ for any $1 \leq i \leq n$ we have $M, f \models \varphi(x,
\bar{w}_n)$. It follows from this convention that truth of a
formula $\varphi(x, \bar{w}_n)$ at an $n$-ary evaluation point is
to some extent independent of a choice of its free variables.

An intuitionistic formula is a formula of intuitionistic predicate
logic without identity. Propositional (i.\,e. $0$-ary predicate)
letters are allowed. We refer to intuitionistic formulas with
letters $i, j, k$, possibly with primes or subscripts.  Their
variables are represented in the same way as in formulas. We
assume a standard Kripke semantics for intuitionistic predicate
logic where in a given world a predicate letter might be true only
for some tuples of objects present in this world.

If $x$ is an individual variable in a first-order language, then
by a standard $x$-translation of intuitionistic formulas into
formulas we mean the following map $ST$ defined by induction on
the complexity of the corresponding intuitionistic formula. First
we assume some map of intuitionistic predicate letters into
classical ones which correlates with each $n$-ary intuitionistic
predicate letter $P$ an $(n + 1)$-ary classical predicate letter
$P'$ distinct from $R^2, E^2$. We assume that this correlation is
surjective, that is, that every predicate letter in $\Sigma$
distinct from $R^2, E^2$ is standard translation of an
intuitionistic predicate letter. Then our induction goes as
follows:
\begin{align*}
&ST(P(\bar{w}_n), x) = P'(x, \bar{w}_n);\\
&ST(\bot, x) = (x \neq x);\\
&ST(i(\bar{w}_n) \wedge j(\bar{w}_n), x) = ST(i(\bar{w}_n), x)
\wedge
ST(j(\bar{w}_n), x);\\
&ST(i(\bar{w}_n) \vee j(\bar{w}_n), x) = ST(i(\bar{w}_n), x) \vee
ST(j(\bar{w}_n), x);\\
&ST(i(\bar{w}_n) \to j(\bar{w}_n), x) = \forall y(R(x, y) \to
(ST(i(\bar{w}_n), y) \to ST(j(\bar{w}_n), y)));\\
&ST(\exists w'i(\bar{w}_n, w'), x) = \exists w'(E(x,w') \wedge
ST(i(\bar{w}_n, w'), x));\\
&ST(\forall w'i(\bar{w}_n, w'), x) = \forall yw'((R(x, y) \wedge
E(y, w')) \to ST(i(\bar{w}_n, w'), y)).
\end{align*}

Standard conditions are imposed on the variables $x, y, \bar{w}_n,
w'$.

By degree of a formula we mean the greatest number of nested
quantifiers occurring in it. A degree of a formula $\varphi$ is
denoted by $r(\varphi)$. Its formal definition by induction on the
complexity of $\varphi$ goes as follows:
\begin{align*}
&r(\varphi) = 0 &&\text{for atomic $\varphi$}\\
&r(\neg\varphi) = r(\varphi)\\
&r(\varphi \circ \psi) = max(r(\varphi), r(\psi)) &&\text{for $\circ \in \{\,\wedge, \vee, \to\,\}$}\\
&r(Qx\varphi) = r(\varphi) + 1 &&\text{for $Q \in \{\,\forall,
\exists\,\}$}
\end{align*}

If $k \in \mathbb{N}$ and $\varphi(x, \bar{w}_n)$ is a
$\Theta$-formula such that $r(\varphi) \leq k$, then $\varphi$ is
a $(\Theta, (x, \bar{w}_n), k)$-formula.

\section{Characterization of intuitionistic predicate formulas
via $k$-asimulations}\label{S:Param}

We begin with extending our previous notion of $k$-asimulation to
cover the general case of predicate logic.
\begin{definition}\label{D:k-asim}
Let $(M, a, \bar{b}_n)$, $(N, c, \bar{d}_n)$ be two $n$-ary
evaluation $\Theta$-points. A binary relation
\[
A \subseteq \bigcup_{m \geq 1, l \geq 0}(((D(M)^m \times D(M)^l)
\times (D(N)^m \times D(N)^l)) \cup ((D(N)^m \times D(N)^l) \times
(D(M)^m \times D(M)^l))),
\]
is called $\langle (M, a, \bar{b}_n), (N, c,
\bar{d}_n)\rangle_k$-asimulation iff $(a; \bar{b}_n)A(c;
\bar{d}_n)$ and for any $\alpha, \beta \in \{\,M, N\,\}$, any
$(\bar{a'}_m, a';\bar{b'}_l) \in D(\alpha)^{m+1} \times
D(\alpha)^l$, $(\bar{c'}_m, c';\bar{d'}_l) \in  D(\beta)^{m+1}
\times D(\beta)^l$, whenever we have $(\bar{a'}_m,
a';\bar{b'}_l)A(\bar{c'}_m, c';\bar{d'}_l)$, the following
conditions hold:

\begin{align}
&\forall P \in \Theta\setminus\{\,R^2,E^2\,\}(\alpha, a',
\bar{b'}_l\models P(x, \bar{w}_l)
\Rightarrow \beta, c', \bar{d'}_l\models P(x, \bar{w}_l))\label{E:c1}\\
&(m + l < n + k \wedge c'' \in D(\beta) \wedge c'R^\beta c'') \Rightarrow\notag\\
&\Rightarrow \exists a'' \in D(\alpha)(a'R^\alpha a'' \wedge
(\bar{c'}_m, c', c'';\bar{d'}_l)\hat{A}(\bar{a'}_m, a',
a'';\bar{b'}_l));\label{E:c2}\\
&(m + l < n + k \wedge b'' \in D(\alpha) \wedge E^\alpha(a', b''))  \Rightarrow\notag\\
&\Rightarrow \exists d'' \in D(\beta)(E^\beta(c',d'') \wedge
(\bar{a'}_m, a';\bar{b'}_l, b'')A(\bar{c'}_m,c';\bar{d'}_l, d''));\label{E:c3}\\
&(m + l + 1 < n + k \wedge  c'', d'' \in D(\beta) \wedge c'R^\beta c''\wedge E^\beta(c'', d'')) \Rightarrow\notag\\
&\Rightarrow \exists a'',b'' \in D(\alpha)(a'R^\alpha a''\wedge
E^\alpha(a'',b'') \wedge (\bar{a'}_m, a', a'';\bar{b'}_l,
b'')A(\bar{c'}_m,c', c'';\bar{d'}_l, d'')).\label{E:c4}
\end{align}
\end{definition}

\begin{lemma}\label{L:asim}
Let $\varphi(x, \bar{w}_n) = ST(i(\bar{w}_n), x)$ for some
intuitionistic formula $i(\bar{w}_n)$, and let $r(\varphi) = k$.
Let $\Sigma_\varphi \subseteq \Theta$, let $(M, t,\bar{u}_s)$,
$(N, t', \bar{u'}_s)$ be two $s$-ary evaluation $\Theta$-points,
and let $A$ be an $\langle(M, t,\bar{u}_s), (N, t',
\bar{u'}_s)\rangle_p$-asimulation. Then
\begin{align*}
&\forall\alpha, \beta \in \{\,M, N\,\}\forall(\bar{a}_m,
a;\bar{b}_n) \in (D(\alpha)^{m+1} \times
D(\alpha)^n)\forall(\bar{c}_m, c;\bar{d}_n)\in (D(\beta)^{m+1}
\times
D(\beta)^n)\\
&(((\bar{a}_m, a;\bar{b}_n)A(\bar{c}_m, c;\bar{d}_n) \wedge m + n
+ k \leq p + s \wedge \alpha, a,  \bar{b}_n\models \varphi(x,
\bar{w}_n) \Rightarrow \beta, c,\bar{d}_n \models \varphi(x,
\bar{w}_n)).
\end{align*}
\end{lemma}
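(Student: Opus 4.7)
I will proceed by induction on the structure of the intuitionistic formula $i$ (equivalently on $\varphi$), exploiting the fact that clauses \eqref{E:c1}--\eqref{E:c4} of Definition \ref{D:k-asim} are designed to match exactly the four non-Boolean cases in the recursive definition of $ST$. The base cases are immediate: for atomic $i = P(\bar{w}_n)$ preservation is precisely clause \eqref{E:c1}; for $i = \bot$ the formula $x \neq x$ is never satisfied, so the implication is vacuous. The cases $i \wedge j$ and $i \vee j$ reduce directly to the induction hypothesis applied to each subformula, since both have degree at most $k$ and the tuple parameters $m, n$ are unchanged.

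For $i \to j$, given $\alpha, a, \bar{b}_n \models \varphi$, pick any $c'' \in D(\beta)$ with $cR^\beta c''$ and suppose $\beta, c'', \bar{d}_n \models ST(i, y)$. Clause \eqref{E:c2} is applicable because $k \geq 1$ forces $m + n < s + p$, and produces $a'' \in D(\alpha)$ with $aR^\alpha a''$ together with a \emph{bidirectional} relation $\hat{A}$ between $(\bar{a}_m, a, a''; \bar{b}_n)$ and $(\bar{c}_m, c, c''; \bar{d}_n)$. The backward part, combined with the IH for $ST(i, y)$ (parameters $m+1, n,$ and degree $\leq k - 1$), transports the antecedent to $\alpha, a'', \bar{b}_n \models ST(i, y)$; the assumption on $\varphi$ in $\alpha$ then yields $\alpha, a'', \bar{b}_n \models ST(j, y)$; the forward part with the IH for $ST(j, y)$ pushes the consequent back to $\beta$. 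In both IH applications $(m+1) + n + (k-1) = m + n + k \leq p + s$ preserves the budget.

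The quantifier cases follow the same pattern. For $\exists w' i(\bar{w}_n, w')$, a witness $b''$ with $E^\alpha(a, b'')$ is matched by \eqref{E:c3} to $d''$ with $E^\beta(c, d'')$, and the IH on $ST(i, x)$ with parameters $m, n+1, k-1$ concludes. For $\forall w' i(\bar{w}_n, w')$ one has $r(\varphi) = r(ST(i, y)) + 2$, so $k \geq 2$, and clause \eqref{E:c4} (whose side condition $m + n + 1 < s + p$ follows from $k \geq 2$) matches any successor-plus-witness pair $c'', d''$ in $\beta$ with a corresponding pair $a'', b''$ in $\alpha$; the IH with parameters $m+1, n+1, k-2$ then transfers $ST(i(\bar{w}_n, w'), y)$ from $\alpha$ to $\beta$.

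The work is almost entirely bookkeeping; the main obstacle is verifying at each inductive step that two inequalities mesh. The side condition of the invoked asimulation clause ($m + l < s + p$ or $m + l + 1 < s + p$) must be met, and the IH's global bound $m_{IH} + n_{IH} + k_{IH} \leq p + s$ must survive the recursion. These conditions fit together precisely because each descent through an implication or a quantifier enlarges $m$ or $n$ (or both) by exactly the total amount by which the degree $k$ shrinks, leaving the sum $m + n + k$ invariant — which is the arithmetic built into Definition \ref{D:k-asim}.
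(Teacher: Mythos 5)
Your proposal is correct and follows essentially the same route as the paper's own proof: induction on the structure of $i$, with each of the conditions \eqref{E:c1}--\eqref{E:c4} handling the corresponding connective or quantifier, both directions of $\hat{A}$ exploited in the implication case, and the invariant $m+n+k \leq p+s$ tracked through each descent exactly as in the paper. The only cosmetic difference is that you argue the implication and universal cases directly while the paper argues them by contradiction.
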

\begin{proof} We proceed by induction on the complexity of $i$.
In what follows we will abbreviate the induction hypothesis by IH.

\emph{Basis.}  Let $i(\bar{w}_n) = P(\bar{w}_n)$. Then
$\varphi(x,\bar{w}_n) = P'(x,\bar{w}_n)$ and we reason as follows:
\begin{align}
&(\bar{a}_m, a;\bar{b}_n)A(\bar{c}_m, c;\bar{d}_n)\label{E:0l1} &&\text{(premise)}\\
&\alpha, a, \bar{b}_n\models P'(x,\bar{w}_n)\label{E:0l1-1} &&\text{(premise)}\\
&P' \in \Theta\setminus\{\,R^2,E^2\,\}\label{E:0l1-2} &&\text{(by $\Sigma_\varphi \subseteq \Sigma'$)}\\
&\forall Q \in \Theta\setminus\{\,R^2,E^2\,\}(\alpha, a,
\bar{b}_n\models Q(x, \bar{w}_n)
\Rightarrow \beta, c, \bar{d}_n\models Q(x, \bar{w}_n))\label{E:0l2} &&\text{(from \eqref{E:0l1} by \eqref{E:c1})}\\
&\alpha, a, \bar{b}_n\models P'(x,\bar{w}_n) \Rightarrow \beta,
b,\bar{d}_n \models P'(x,\bar{w}_n)\label{E:0l3} &&\text{(from \eqref{E:0l1-2} and \eqref{E:0l2})}\\
&\beta, c,\bar{d}_n\models P'(x,\bar{w}_n)\label{E:0l4}
&&\text{(from \eqref{E:0l1-1} and \eqref{E:0l3})}
\end{align}

The case $i = \bot$ is obvious.

\emph{Induction step.}

\emph{Case 1.} Let $i(\bar{w}_n) = j(\bar{w}_n) \wedge
k(\bar{w}_n)$. Then $\varphi(x, \bar{w}_n) = ST(j(\bar{w}_n), x)
\wedge ST(k(\bar{w}_n),x)$ and we reason as follows:

\begin{align}
&(\bar{a}_m, a;\bar{b}_n)A(\bar{c}_m, c;\bar{d}_n)\label{E:1l1} &&\text{(premise)}\\
&\alpha, a, \bar{b}_n\models ST(j(\bar{w}_n), x) \wedge
ST(k(\bar{w}_n),x)\label{E:1l2} &&\text{(premise)}\\
&m + n + r(ST(j(\bar{w}_n), x) \wedge ST(k(\bar{w}_n),x)) \leq p + s\label{E:1l3} &&\text{(premise)}\\
&r(ST(j(\bar{w}_n), x)) \leq r(ST(j(\bar{w}_n), x) \wedge ST(k(\bar{w}_n),x))\label{E:1l4} &&\text{(by df of $r$)}\\
&r(ST(k(\bar{w}_n), x)) \leq r(ST(j(\bar{w}_n), x) \wedge ST(k(\bar{w}_n),x))\label{E:1l5} &&\text{(by df of $r$)}\\
&\alpha, a, \bar{b}_n\models ST(j(\bar{w}_n), x)\label{E:1l6} &&\text{(from \eqref{E:1l2})}\\
&\alpha, a, \bar{b}_n\models ST(k(\bar{w}_n), x)\label{E:1l7} &&\text{(from \eqref{E:1l2})}\\
&m + n + r(ST(j(\bar{w}_n), x)) \leq p + s\label{E:1l8} &&\text{(from \eqref{E:1l3} and \eqref{E:1l4})}\\
&m + n + r(ST(k(\bar{w}_n), x)) \leq p + s\label{E:1l9} &&\text{(from \eqref{E:1l3} and \eqref{E:1l5})}\\
&\beta, c,\bar{d}_n \models ST(j(\bar{w}_n), x)\label{E:1l10} &&\text{(from \eqref{E:1l1}, \eqref{E:1l6} and \eqref{E:1l8} by IH)}\\
&\beta, c,\bar{d}_n \models ST(k(\bar{w}_n), x)\label{E:1l11} &&\text{(from \eqref{E:1l1}, \eqref{E:1l7} and \eqref{E:1l9} by IH)}\\
&\beta, c,\bar{d}_n \models ST(j(\bar{w}_n), x) \wedge
ST(k(\bar{w}_n),x)\label{E:1l12} &&\text{(from \eqref{E:1l10} and
\eqref{E:1l11})}
\end{align}

\emph{Case 2.} Let $i(\bar{w}_n) = j(\bar{w}_n) \vee
k(\bar{w}_n)$. Then $\varphi(x, \bar{w}_n) = ST(j(\bar{w}_n), x)
\vee ST(k(\bar{w}_n),x)$ and we have then $\alpha, a,
\bar{b}_n\models ST(j(\bar{w}_n), x) \vee ST(k(\bar{w}_n),x)$.
Assume, without a loss of generality, that $\alpha, a,
\bar{b}_n\models ST(j(\bar{w}_n), x)$. Then we reason as follows:

\begin{align}
&\alpha, a,\bar{b}_n\models ST(j(\bar{w}_n), x)\label{E:2l0} &&\text{(premise)}\\
&(\bar{a}_m, a;\bar{b}_n)A(\bar{c}_m, c;\bar{d}_n)\label{E:2l1} &&\text{(premise)}\\
&m + n + r(ST(j(\bar{w}_n), x)
\vee ST(k(\bar{w}_n),x)) \leq p + s\label{E:2l2} &&\text{(premise)}\\
&r(ST(j(\bar{w}_n), x)) \leq r(ST(j(\bar{w}_n), x)
\vee ST(k(\bar{w}_n),x))\label{E:2l3} &&\text{(by df of $r$)}\\
&m + n + r(ST(j(\bar{w}_n), x)) \leq p + s\label{E:2l4} &&\text{(from \eqref{E:2l2} and \eqref{E:2l3})}\\
&\beta, c,\bar{d}_n \models ST(j(\bar{w}_n), x)\label{E:2l5} &&\text{(from \eqref{E:2l0}, \eqref{E:2l1} and \eqref{E:2l4} by IH)}\\
&\beta, c,\bar{d}_n \models ST(j(\bar{w}_n), x) \vee
ST(k(\bar{w}_n),x)\label{E:2l6} &&\text{(from \eqref{E:2l5})}
\end{align}

\emph{Case 3.} Let $i(\bar{w}_n) = j(\bar{w}_n) \to k(\bar{w}_n)$.
Then
\[
\varphi(x, \bar{w}_n) = \forall y(R(x, y) \to (ST(j(\bar{w}_n), y)
\to ST(k(\bar{w}_n), y))).
\]
Let
\[
\alpha, a,\bar{b}_n \models \forall y(R(x, y) \to
(ST(j(\bar{w}_n), y) \to ST(k(\bar{w}_n), y))),
\]
and let
\[
\beta, c,\bar{d}_n \models \exists y(R(x, y) \wedge
(ST(j(\bar{w}_n), y) \wedge \neg ST(k(\bar{w}_n), y))).
\]
This means that we can choose a $c' \in D(\beta)$ such that
$cR^\beta c'$ and $\beta, c', \bar{d}_n\models ST(j(\bar{w}_n), y)
\wedge \neg ST(k(\bar{w}_n), y)$. We now reason as follows:
\begin{align}
&\beta, c', \bar{d}_n\models ST(j(\bar{w}_n), y) \wedge \neg
ST(k(\bar{w}_n),
y)\label{E:4l00} &&\text{(by choice of $c'$)}\\
&c' \in D(\beta) \wedge cR^\beta c'\label{E:4l0} &&\text{(by choice of $c'$)}\\
&(\bar{a}_m, a;\bar{b}_n)A(\bar{c}_m, c;\bar{d}_n)\label{E:4l1} &&\text{(premise)}\\
&m + n + r(\varphi(x, \bar{w}_n)) \leq p + s\label{E:4l2} &&\text{(premise)}\\
&r(\varphi(x, \bar{w}_n))) \geq 1 \label{E:4l3} &&\text{(by df of $r$)}\\
&m + n < p + s\label{E:4l4} &&\text{(from \eqref{E:4l2} and
\eqref{E:4l3})}\\
&\exists a' \in D(\alpha)(aR^\alpha a' \wedge (\bar{c}_m,
c,c';\bar{d}_n)\hat{A}(\bar{a}_m, a,a';\bar{b}_n))\label{E:4l5}
&&\text{(from \eqref{E:4l0}, \eqref{E:4l1} and \eqref{E:4l4}
by\eqref{E:c2})}
\end{align}
Now choose an $a'$ for which \eqref{E:4l5} is satisfied; we add
the premises following from our choice of $a'$ and continue our
reasoning as follows:
\begin{align}
&a' \in D(\alpha) \wedge aR^\alpha a'\label{E:4l6} &&\text{(by choice of $a'$)}\\
&(\bar{c}_m,
c,c';\bar{d}_n)A(\bar{a}_m, a,a';\bar{b}_n)\label{E:4l7} &&\text{(by choice of $a'$)}\\
&(\bar{a}_m, a,a';\bar{b}_n)A(\bar{c}_m,
c,c';\bar{d}_n)\label{E:4l7-1} &&\text{(by choice of $a'$)}\\
&r(ST(j(\bar{w}_n), y)) \leq r(\varphi(x,\bar{w}_n)) - 1\label{E:4l8} &&\text{(by df of $r$)}\\
&r(ST(k(\bar{w}_n), y)) \leq r(\varphi(x,\bar{w}_n)) - 1\label{E:4l8-1} &&\text{(by df of $r$)}\\
&m + 1 + n + r(ST(j(\bar{w}_n), y)) \leq p + s\label{E:4l9}
&&\text{(from
\eqref{E:4l2} and \eqref{E:4l8})}\\
&m + 1 + n + r(ST(k(\bar{w}_n), y)) \leq p + s\label{E:4l9-1}
&&\text{(from
\eqref{E:4l2} and \eqref{E:4l8-1})}\\
&\alpha, a',\bar{b}_n \models ST(j(\bar{w}_n), x)\label{E:4l10} &&\text{(from \eqref{E:4l00}, \eqref{E:4l7}, \eqref{E:4l9} by IH)}\\
&\alpha, a',\bar{b}_n \models \neg ST(k(\bar{w}_n), x)\label{E:4l10-1} &&\text{(from \eqref{E:4l00}, \eqref{E:4l7-1}, \eqref{E:4l9-1} by IH)}\\
&\alpha, a',\bar{b}_n \models ST(j(\bar{w}_n), y) \wedge \neg ST(k(\bar{w}_n), y)\label{E:4l10-2} &&\text{(from \eqref{E:4l10}, \eqref{E:4l10-1})}\\
&\alpha, a,\bar{b}_n  \models \exists y(R(x, y) \wedge
(ST(j(\bar{w}_n), y) \wedge \neg ST(k(\bar{w}_n),
y)))\label{E:4l11} &&\text{(from \eqref{E:4l6} and
\eqref{E:4l10-2})}
\end{align}
The last line contradicts our initial assumption that
\[
\alpha, a,\bar{b}_n \models \forall y(R(x, y) \to
(ST(j(\bar{w}_n), y) \to ST(k(\bar{w}_n), y))),
\]

\emph{Case 4.} Let $i(\bar{w}_n) = \exists w' j(\bar{w}_n, w')$.
Then
\[
\varphi(x, \bar{w}_n) = \exists w'(E(x, w') \wedge ST(j(\bar{w}_n,
w'), x)).
\]
Let $\alpha, a, \bar{b}_n \models \exists w'(E(x, w') \wedge
ST(j(\bar{w}_n, w'), x))$. This means that we can choose a $b' \in
D(\alpha)$ such that $aE^\alpha b'$ and $\alpha, a, \bar{b}_n,b'
\models ST(j(\bar{w}_n, w'), x)$. We now reason as follows:
\begin{align}
&\alpha, a, \bar{b}_n,b' \models ST(j(\bar{w}_n, w'), x)\label{E:5l0} &&\text{(by choice of $b'$)}\\
&b' \in D(\alpha) \wedge E^\alpha(a, b')\label{E:5l1} &&\text{(by choice of $b'$)}\\
&(\bar{a}_m, a;\bar{b}_n)A(\bar{c}_m, c;\bar{d}_n)\label{E:5l2} &&\text{(premise)}\\
&m + n + r(\varphi(x, \bar{w}_n)) \leq p + s\label{E:5l3} &&\text{(premise)}\\
&r(\varphi(x, \bar{w}_n)) \geq 1 \label{E:5l4} &&\text{(by df of $r$)}\\
&m + n < p + s\label{E:5l5} &&\text{(from \eqref{E:5l3} and
\eqref{E:5l4})}\\
&\exists d' \in D(\beta)(E^\beta(c,d') \wedge (\bar{a}_m, a;
\bar{b}_n, b')A(\bar{c}_m,c;\bar{d}_n, d'))\label{E:5l6}
&&\text{(from \eqref{E:5l1}, \eqref{E:5l2} and \eqref{E:5l5} by
\eqref{E:c3})}
\end{align}
Now choose a $d'$ for which \eqref{E:5l6} is satisfied; we add the
premises following from our choice of $d'$ and continue our
reasoning as follows:
\begin{align}
&d' \in D(\beta) \wedge E^\beta(c,d')\label{E:5l7} &&\text{(by choice of $d'$)}\\
&(\bar{a}_m, a; \bar{b}_n,
b')A(\bar{c}_m,c;\bar{d}_n, d')\label{E:5l8} &&\text{(by choice of $d'$)}\\
&r(ST(j(\bar{w}_n,w'), x)) = r(\varphi(x, \bar{w}_n)) - 1\label{E:5l9} &&\text{(by df of $r$)}\\
&m + n + 1 + r(ST(j(\bar{w}_n,w'), x)) \leq p + s\label{E:5l10}
&&\text{(from
\eqref{E:5l3} and \eqref{E:5l9})}\\
&\beta, c, \bar{d}_n, d' \models ST(j(\bar{w}_n,w'), x)\label{E:5l11} &&\text{(from \eqref{E:5l0}, \eqref{E:5l8}, \eqref{E:5l10} by IH)}\\
&\beta, c, \bar{d}_n \models \exists w'(E(x, w') \wedge
ST(j(\bar{w}_n, w'), x))\label{E:5l12} &&\text{(from \eqref{E:5l7}
and \eqref{E:5l11})}
\end{align}

\emph{Case 5}. Let $i(\bar{w}_n) = \forall w'j(\bar{w}_n, w')$.
Then
\[
\varphi(x, \bar{w}_n) = \forall yw'((R(x,y) \wedge E(y, w')) \to
ST(j(\bar{w}_n, w'), y)).
\]
Let
\[
\alpha, a, \bar{b}_n \models \forall yw'((R(x,y) \wedge E(y, w'))
\to ST(j(\bar{w}_n, w'), y)),
\]
and let
\[
\beta, c, \bar{d}_n \models \exists yw'((R(x,y) \wedge E(y, w'))
\wedge \neg ST(j(\bar{w}_n, w'), y)).
\]
The latter fact means that we can choose some $c',d' \in D(\beta)$
such that $cR^\beta c'$, $E^\beta(c',d')$, and $\beta, c',
\bar{d}_n, d' \models \neg ST(j(\bar{w}_n, w'), y)$. We now reason
as follows:
\begin{align}
&\beta, c',
\bar{d}_n, d' \models \neg ST(j(\bar{w}_n, w'), y)\label{E:6l0} &&\text{(by choice of $c', d'$)}\\
&c' \in D(\beta) \wedge cR^\beta c'\label{E:6l1} &&\text{(by choice of $c'$)}\\
&d' \in D(\beta) \wedge E^\beta(c',d')\label{E:6l1-1} &&\text{(by choice of  $c', d'$)}\\
&(\bar{a}_m, a;\bar{b}_n)A(\bar{c}_m, c;\bar{d}_n)\label{E:6l2} &&\text{(premise)}\\
&m + n + r(\varphi(x, \bar{w}_n)) \leq p + s\label{E:6l3} &&\text{(premise)}\\
&r(\varphi(x, \bar{w}_n))) \geq 2 \label{E:6l4} &&\text{(by df of $r$)}\\
&m + n + 1 < p + s\label{E:6l5} &&\text{(from \eqref{E:6l3} and
\eqref{E:6l4})}
\end{align}
\begin{align}
\exists a'b' \in D(aR^\alpha a'\wedge E^\alpha(a',b') \wedge
(\bar{a}_m, a, a'; \bar{b}_n, b')A(\bar{c}_m,c,
c';\bar{d}_n, d'))\label{E:6l6}\\
\text{(from \eqref{E:6l1}, \eqref{E:6l1-1}, \eqref{E:6l2} and
\eqref{E:6l5} by\eqref{E:c4})}\notag
\end{align}

Then choose $a',b' \in D(\alpha)$ for which \eqref{E:6l6} is
satisfied. We add the premises following from our choice of $a',
b'$ and continue our reasoning as follows:

\begin{align}
&a' \in D(\alpha) \wedge aR^\alpha a'\label{E:6l7} &&\text{(by choice of $a'$)}\\
&b' \in D(\alpha) \wedge E^\alpha(a',b')\label{E:6l7-1} &&\text{(by choice of  $a', b'$)}\\
&(\bar{a}_m, a, a'; \bar{b}_n, b')A(\bar{c}_m,c,
c';\bar{d}_n, d')\label{E:6l8} &&\text{(by choice of $a', b'$)}\\
&r(\neg
ST(j(\bar{w}_n, w'), y)) = r(\varphi(x, \bar{w}_n)) - 2\label{E:6l9} &&\text{(by df of $r$)}\\
&m + 1 + n + 1 + r(\neg ST(j(\bar{w}_n, w'), y)) \leq p +
s\label{E:6l10} &&\text{(from
\eqref{E:6l3} and \eqref{E:6l9})}\\
&\alpha, a', \bar{b}_n,b' \models \neg
ST(j(\bar{w}_n, w'), y)\label{E:6l11} &&\text{(from \eqref{E:6l0}, \eqref{E:6l8}, \eqref{E:6l10} by IH)}\\
&\alpha, a, \bar{b}_n \models \exists yw'((R(x,y) \wedge E(y, w'))
\wedge \neg ST(j(\bar{w}_n, w'), y))\label{E:6l12} &&\text{(from
\eqref{E:6l7}, \eqref{E:6l7-1} and \eqref{E:6l11})}
\end{align}
The last line contradicts our initial assumption that
\[
\alpha, a, \bar{b}_n \models \forall yw'((R(x,y) \wedge E(y, w'))
\to ST(j(\bar{w}_n, w'), y)).
\]
\end{proof}

\begin{definition}\label{D:k-inv}
A formula $\varphi(x, \bar{w}_n)$ is invariant with respect to
$k$-asimulations iff for any $\Theta$ such that $\Sigma_\varphi
\subseteq \Theta$, any two $n$-ary evaluation $\Theta$-points $(M,
a, \bar{b}_n)$ and $(N, c, \bar{d}_n)$, if there exists a $\langle
(M, a, \bar{b}_n), (N, c, \bar{d}_n)\rangle_k$-asimulation $A$ and
$M, a, \bar{b}_n \models \varphi(x, \bar{w}_n)$, then $N,  c,
\bar{d}_n \models \varphi(x, \bar{w}_n)$.
\end{definition}

\begin{corollary}\label{L:c-k-inv}
If $\varphi(x, \bar{w}_n)$ is a standard $x$-translation of an
intuitionistic formula and $r(\varphi) = k$, then $\varphi(x,
\bar{w}_n)$ is invariant with respect to $k$-asimulations.
\end{corollary}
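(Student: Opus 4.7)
The plan is to derive this corollary directly from Lemma \ref{L:asim}; all the substantive work is already in the lemma, and what remains is a single instantiation. Unpacking Definition \ref{D:k-inv}, I need to show: whenever $\Sigma_\varphi \subseteq \Theta$, $(M,a,\bar{b}_n)$ and $(N,c,\bar{d}_n)$ are $n$-ary evaluation $\Theta$-points, $A$ is an $\langle(M,a,\bar{b}_n),(N,c,\bar{d}_n)\rangle_k$-asimulation, and $M, a, \bar{b}_n \models \varphi(x,\bar{w}_n)$, then $N, c, \bar{d}_n \models \varphi(x,\bar{w}_n)$.

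The key step is to apply Lemma \ref{L:asim} with the substitution $(M, t, \bar{u}_s) := (M, a, \bar{b}_n)$ and $(N, t', \bar{u'}_s) := (N, c, \bar{d}_n)$, so that $s = n$ and $p = k$. Since $A$ is, by hypothesis, an asimulation of the right kind and $\Sigma_\varphi \subseteq \Theta$, the global hypotheses of the lemma are satisfied. I then instantiate its conclusion with $\alpha := M$, $\beta := N$, $m := 0$ (taking $\bar{a}_m$ and $\bar{c}_m$ to be the empty tuple $\Lambda$), and the remaining tuples equal to $(a;\bar{b}_n)$ and $(c;\bar{d}_n)$, respectively. Under this choice the numerical side-condition $m + n + k \leq p + s$ collapses to the trivial equality $n + k = k + n$; the relational premise $(\bar{a}_m, a;\bar{b}_n) A (\bar{c}_m, c;\bar{d}_n)$ reduces to $(a;\bar{b}_n) A (c;\bar{d}_n)$, which is built into the definition of an asimulation; and the conclusion is precisely $N, c, \bar{d}_n \models \varphi(x,\bar{w}_n)$, as desired.

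No real obstacle is to be expected here, since the inductive labour has already been discharged by Lemma \ref{L:asim}. The only point deserving a cautious sentence is the admissibility of $m = 0$ in the universal quantifier appearing in the conclusion of Lemma \ref{L:asim}: this is consistent with the notation $\bar{a}_m$ (allowing the empty tuple) and matches the shape of the distinguished pair $(a;\bar{b}_n)A(c;\bar{d}_n)$ required by Definition \ref{D:k-asim}, so the instantiation is legitimate and the corollary follows in one line.
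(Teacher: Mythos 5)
Your proposal is correct and matches the paper's own argument exactly: the paper likewise derives the corollary from Lemma \ref{L:asim} by setting $\alpha = M$, $\beta = N$, $m = 0$, $p = k$, $t = a$, $\bar{u}_s = \bar{b}_n$, $t' = c$, $\bar{u'}_s = \bar{d}_n$. Your additional remarks on the trivial side-condition $n + k \leq k + n$ and the legitimacy of the empty tuple are sound, if more explicit than the paper's one-line justification.
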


Corollary \ref{L:c-k-inv} immediately follows from Lemma
\ref{L:asim} setting $\alpha = M$, $\beta = N$, $m = 0$, $p = k$,
$t = a$, $\bar{u}_s = \bar{b}_n$, $t' = c$, $\bar{u'}_s =
\bar{d}_n$.

Before we state and prove our main result, we need to mention a
fact from classical model theory of first-order logic.

\begin{lemma}\label{L:fin}
For any finite $\Theta$ and any natural $n, k$ there are, up to
logical equivalence, only finitely many $(\Theta, (x, \bar{w}_n),
k)$-formulas.
\end{lemma}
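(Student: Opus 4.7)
The plan is to prove the lemma by induction on $k$, but for the induction to succeed I would actually establish the following strengthening: for every finite $\Theta$ containing $R^2$ and $E^2$ and every finite tuple of variables $\bar{x}_r$, there are, up to logical equivalence, only finitely many $\Theta$-formulas of quantifier rank at most $k$ whose free variables are among $\bar{x}_r$. The strengthening is needed because the induction step introduces a bound variable that becomes free in subformulas.

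For the base case $k = 0$, I first note that the atomic $\Theta$-formulas in the variables $\bar{x}_r$ are finite in number: each predicate letter of $\Theta$ has a fixed arity and admits only finitely many ways of filling its argument positions from $\bar{x}_r$, while the identity atoms $x_i = x_j$ are also finite. A quantifier-free formula is a Boolean combination of these finitely many atoms, and the free Boolean algebra on a finite set of generators is itself finite, so only finitely many such formulas remain up to logical equivalence.

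For the induction step from $k$ to $k+1$, I would rely on the observation that every $(\Theta, (x, \bar{w}_n), k+1)$-formula is logically equivalent to a Boolean combination of atoms in $x, \bar{w}_n$ together with formulas of the form $\exists y\,\psi$, where $\psi$ has rank at most $k$ and free variables among $x, \bar{w}_n, y$. This is immediate by structural induction on the formula, rewriting $\forall y\,\psi$ as $\neg\exists y\,\neg\psi$ and propagating through the Boolean connectives; by renaming bound variables I can assume $y$ is drawn from a single fixed fresh variable. The strengthened induction hypothesis, applied to the extended tuple $(x, \bar{w}_n, y)$, then yields only finitely many such $\psi$ up to equivalence, hence only finitely many subformulas $\exists y\,\psi$. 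A finite set of generators for the Boolean algebra again produces finitely many equivalence classes, completing the step.

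The only real obstacle is bookkeeping: stating the induction with sufficient generality over tuples of free variables, and checking that the reduction of a rank-$(k+1)$ formula to a Boolean combination of atoms and rank-$k$ existential subformulas is carried out cleanly. Everything else is elementary first-order model theory and should pose no technical difficulty.
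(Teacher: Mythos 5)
Your proof is correct, but note that the paper does not prove this lemma at all: it simply cites Lemma~3.4 of Ebbinghaus--Flum--Thomas. The argument you give --- induction on quantifier rank with the strengthened hypothesis over finite tuples of free variables, reducing a rank-$(k+1)$ formula to a Boolean combination of atoms and formulas $\exists y\,\psi$ with $\psi$ of rank at most $k$, and using finiteness of the free Boolean algebra on finitely many generators --- is essentially the standard proof given in that reference, so it fills the gap in exactly the intended way.
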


This fact is proved as Lemma 3.4 in \cite[pp. 189--190]{EFT}.

\begin{definition}\label{D:conj}
Let $\varphi(x, \bar{w}_n)$ be a formula. A conjunction of
$(\Sigma_\varphi,(x, \bar{w}_n), k)$-formulas $\Psi(x, \bar{w}_n)$
is called a complete $(\varphi, (x, \bar{w}_n), k)$-conjunction
iff (1) every conjunct in $\Psi(x)$ is a standard $x$-translation
of an intuitionistic formula; and (2) there is an $n$-ary
evaluation point $(M, a, \bar{b}_n)$ such that $M, a, \bar{b}_n
\models \Psi(x, \bar{w}_n) \wedge \varphi(x, \bar{w}_n)$ and for
any $(\Sigma_\varphi, (x, \bar{w}_n),k)$-formula $\psi(x,
\bar{w}_n)$, if $\psi(x, \bar{w}_n)$ is a standard $x$-translation
of an intuitionistic formula and $M, a, \bar{b}_n \models \psi(x,
\bar{w}_n)$, then $\Psi(x, \bar{w}_n) \models\psi(x, \bar{w}_n)$.
\end{definition}

\begin{lemma}\label{L:conj-ex}
For any formula $\varphi(x, \bar{w}_n)$, any natural $k$, and any
$n$-ary evaluation point $(M, a, \bar{b}_n)$ such that $M, a,
\bar{b}_n \models \varphi(x, \bar{w}_n)$ there is a complete
$(\varphi,(x, \bar{w}_n),k)$-conjunction $\Psi(x, \bar{w}_n)$ such
that $M, a, \bar{b}_n \models \Psi(x, \bar{w}_n) \wedge \varphi(x,
\bar{w}_n)$.
\end{lemma}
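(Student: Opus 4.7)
The plan is to apply Lemma~\ref{L:fin} to cut the set of candidate conjuncts down to a finite set of representatives, and then simply take their conjunction.

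More concretely, let $S$ be the set of all $(\Sigma_\varphi, (x, \bar{w}_n), k)$-formulas $\psi$ that are standard $x$-translations of intuitionistic formulas and satisfy $M, a, \bar{b}_n \models \psi(x, \bar{w}_n)$. By Lemma~\ref{L:fin}, $S$ meets only finitely many logical equivalence classes of $(\Sigma_\varphi, (x, \bar{w}_n), k)$-formulas; from each such class that is touched by $S$ I would pick a representative $\psi_i \in S$, obtaining a finite list $\psi_1, \ldots, \psi_r$. (This list is nonempty because, e.\,g., the standard translation of $\neg\bot$ lies in $S$.) Now set $\Psi(x, \bar{w}_n) = \psi_1 \wedge \cdots \wedge \psi_r$.

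I would then verify the two clauses of Definition~\ref{D:conj}. Clause (1) is immediate since each $\psi_i$ was drawn from $S$ and so is itself a standard $x$-translation. For clause (2) the witness evaluation point is the given $(M, a, \bar{b}_n)$: each $\psi_i$ holds there by virtue of $\psi_i \in S$, and $\varphi$ holds there by hypothesis, so $M, a, \bar{b}_n \models \Psi \wedge \varphi$. If $\psi(x, \bar{w}_n)$ is any $(\Sigma_\varphi, (x, \bar{w}_n), k)$-formula that is a standard $x$-translation true at $(M, a, \bar{b}_n)$, then $\psi \in S$, so $\psi$ is logically equivalent to some $\psi_i$, and consequently $\Psi \models \psi$.

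The only delicate point---and it is not really an obstacle---is insisting that each conjunct \emph{itself} be a standard translation rather than merely equivalent to one. A naive application of Lemma~\ref{L:fin} would furnish arbitrary representatives of the equivalence classes; pulling the $\psi_i$ out of $S$ guarantees that clause (1) of Definition~\ref{D:conj} holds on the nose, and this is the only place where any care is required.
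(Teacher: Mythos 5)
Your proposal is correct and follows essentially the same route as the paper: apply Lemma~\ref{L:fin} to extract a finite set of representatives \emph{from within} the set of true standard translations (so that clause (1) of Definition~\ref{D:conj} holds on the nose), note nonemptiness via $ST(\bot\to\bot,x)$, and take the conjunction, with $(M,a,\bar{b}_n)$ itself witnessing clause (2). No gaps.
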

\begin{proof} Let $\{\,\psi_1(x, \bar{w}_n)\ldots, \psi_n(x,
\bar{w}_n),\ldots\,\}$ be the set of all $(\Sigma_\varphi, (x,
\bar{w}_n), k)$-formulas that are standard $x$-translations of
intuitionistic formulas true at $(M, a, \bar{b}_n)$. This set is
non-empty since $ST(\bot \to \bot, x)$ will be true at $(M, a,
\bar{b}_n)$. Due to Lemma \ref{L:fin}, we can choose in this set a
non-empty finite subset $\{\,\psi_{i_1}(x, \bar{w}_n)\ldots,
\psi_{i_n}(x, \bar{w}_n)\,\}$ such that any formula from the
bigger set is logically equivalent to (and hence follows from) a
formula in this subset. Therefore, every formula in the bigger set
follows from $\psi_{i_1}(x, \bar{w}_n) \wedge\ldots,
\wedge\psi_{i_n}(x, \bar{w}_n)$ and we also have $M, a, \bar{b}_n
\models \psi_{i_1}(x, \bar{w}_n) \wedge\ldots, \wedge\psi_{i_n}(x,
\bar{w}_n)$, therefore, $\psi_{i_1}(x, \bar{w}_n) \wedge\ldots,
\wedge\psi_{i_n}(x, \bar{w}_n)$ is a complete $(\varphi, (x,
\bar{w}_n), k)$-conjunction. \end{proof}

\begin{lemma}\label{L:conj-fin}
For any formula $\varphi(x, \bar{w}_n)$ and any natural $k$ there
are, up to logical equivalence, only finitely many complete
$(\varphi,(x, \bar{w}_n), k)$-conjunctions.
\end{lemma}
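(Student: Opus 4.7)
The plan is to reduce this directly to Lemma \ref{L:fin}. The key observation is that $\Sigma_\varphi$ is finite by the very definition of $\Sigma_\varphi$ given in the Preliminaries, so Lemma \ref{L:fin} applies: up to logical equivalence, there are only finitely many $(\Sigma_\varphi, (x, \bar{w}_n), k)$-formulas. Call this finite set of representatives $\{\chi_1(x,\bar{w}_n), \ldots, \chi_N(x,\bar{w}_n)\}$. Every conjunct of a complete $(\varphi,(x,\bar{w}_n),k)$-conjunction is, by Definition \ref{D:conj}, a $(\Sigma_\varphi,(x,\bar{w}_n),k)$-formula, hence logically equivalent to some $\chi_i$.

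Next I would observe that a conjunction, being a \emph{finite} conjunction of such formulas (this is how the term is used in the proof of Lemma \ref{L:conj-ex}, where a complete conjunction is literally constructed as a finite conjunction $\psi_{i_1} \wedge \ldots \wedge \psi_{i_n}$), is determined up to logical equivalence by the \emph{set} of equivalence classes of its conjuncts. Hence any complete $(\varphi,(x,\bar{w}_n),k)$-conjunction is logically equivalent to $\bigwedge_{i \in I} \chi_i$ for some $I \subseteq \{1,\ldots,N\}$. Since $\{1,\ldots,N\}$ has only $2^N$ subsets, the collection of complete $(\varphi,(x,\bar{w}_n),k)$-conjunctions has at most $2^N$ equivalence classes, which is finite.

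The only genuine subtlety is the implicit assumption that ``conjunction'' in Definition \ref{D:conj} means \emph{finite} conjunction; but this is the standard reading in first-order logic, is necessary for $\Psi(x,\bar{w}_n)$ to itself be a formula, and is consistent with the construction carried out in the proof of Lemma \ref{L:conj-ex}. Once this point is flagged, the argument above goes through without further calculation, so I do not expect any real obstacle in writing this up; the proof is essentially a one-step consequence of Lemma \ref{L:fin} combined with the finiteness of the powerset of a finite set.
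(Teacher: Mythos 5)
Your proof is correct, and it uses the same key ingredient as the paper (Lemma \ref{L:fin} together with the finiteness of $\Sigma_\varphi$), but it applies that lemma one level lower than the paper does. You apply Lemma \ref{L:fin} to the \emph{conjuncts}, obtaining finitely many representatives $\chi_1,\dots,\chi_N$, and then recover finiteness of the conjunctions via the powerset bound $2^N$, using the fact that a finite conjunction is determined up to logical equivalence by the set of equivalence classes of its conjuncts. The paper instead makes the single observation that a complete $(\varphi,(x,\bar{w}_n),k)$-conjunction is \emph{itself} a $(\Sigma_\varphi,(x,\bar{w}_n),k)$-formula --- since by Definition \ref{D:conj} its conjuncts are $(\Sigma_\varphi,(x,\bar{w}_n),k)$-formulas and, by the inductive definition of degree, $r(\varphi\wedge\psi)=\max(r(\varphi),r(\psi))$, so conjunction raises neither the degree nor the vocabulary nor the set of free variables --- and then invokes Lemma \ref{L:fin} directly on the conjunctions. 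The paper's route is shorter and avoids the combinatorial step entirely; your route has the minor advantage of not needing to notice that degree is preserved under conjunction, and your explicit flagging of the implicit assumption that ``conjunction'' means finite conjunction is a fair point that the paper leaves tacit. Either way the result follows, so there is no gap.
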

\begin{proof} It suffices to observe that for any formula
$\varphi(x, \bar{w}_n)$ and any natural $k$, a complete
$(\varphi,(x, \bar{w}_n), k)$-conjunction is a $(\Sigma_\varphi,
(x, \bar{w}_n), k)$-formula. Our lemma then follows from Lemma
\ref{L:fin}. \end{proof}

In what follows we adopt the following notation for the fact that
for any sequence $(x, \bar{w}_n)$ of variables all
$(\Sigma_\varphi, (x, \bar{w}_n), k)$-formulas that are standard
translations of intuitionistic formulas true at $(M, a,
\bar{b}_n)$, are also true at $(N, c, \bar{d}_n)$:
\[
(M, a, \bar{b}_n) \leq_{\varphi, n, k} (N, c, \bar{d}_n).
\]

\begin{theorem}\label{L:t1}
Let $r(\varphi(x, \bar{w}_n)) = k$ and let $\varphi(x, \bar{w}_n)$
be invariant with respect to $k$-asimulations. Then $\varphi(x,
\bar{w}_n)$ is equivalent to a standard $x$-translation of an
intuitionistic formula.
\end{theorem}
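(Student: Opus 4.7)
The strategy is to exhibit a standard $x$-translation of an intuitionistic formula equivalent to $\varphi$. Let
\[
\Phi(x, \bar{w}_n) = \bigvee \{\,\Psi(x, \bar{w}_n) \mid \Psi \text{ is a complete } (\varphi, (x, \bar{w}_n), k)\text{-conjunction}\,\}.
\]
By Lemma \ref{L:conj-fin} this is a finite disjunction up to logical equivalence, and since each disjunct is a finite conjunction of standard $x$-translations of intuitionistic formulas, $\Phi$ itself is equivalent to the standard translation of an intuitionistic disjunction of conjunctions. The implication $\varphi \models \Phi$ follows at once from Lemma \ref{L:conj-ex}. For the converse, suppose $N, c, \bar{d}_n \models \Phi$ with witnessing disjunct $\Psi$; Definition \ref{D:conj} supplies an $(M, a, \bar{b}_n) \models \Psi \wedge \varphi$ whose entire degree-$\leq k$ intuitionistic $\Sigma_\varphi$-theory is entailed by $\Psi$ and so holds at $(N, c, \bar{d}_n)$, giving $(M, a, \bar{b}_n) \leq_{\varphi, n, k} (N, c, \bar{d}_n)$. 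By Definition \ref{D:k-inv} and the invariance hypothesis, it will then suffice to construct a $\langle(M, a, \bar{b}_n), (N, c, \bar{d}_n)\rangle_k$-asimulation.

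Pass to $\omega$-saturated elementary extensions $M^*$ of $M$ and $N^*$ of $N$; elementarity preserves $\leq_{\varphi, n, k}$ and the truth of $\varphi$ at the designated tuples. For $\alpha, \beta \in \{M^*, N^*\}$ and tuples from the respective domains with $m + l \leq n + k$, declare that $(\bar{a'}_m, a'; \bar{b'}_l) A (\bar{c'}_m, c'; \bar{d'}_l)$ holds iff every standard $x$-translation of an intuitionistic $\Sigma_\varphi$-formula of degree $\leq k + n - m - l$ true at $(\alpha, a', \bar{b'}_l)$ is also true at $(\beta, c', \bar{d'}_l)$; at the initial tuple this reduces to the given $\leq_{\varphi, n, k}$-link. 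Condition \eqref{E:c1} is immediate, atomic formulas being intuitionistic translations of degree $0$. Condition \eqref{E:c3} requires no saturation: the conjunction $\Psi$ of all (finitely many, by Lemma \ref{L:fin}) degree-$\leq k + n - m - l - 1$ intuitionistic translations true at $(\alpha, a', \bar{b'}_l, b'')$ yields $\exists w'(E(x, w') \wedge \Psi)$, an intuitionistic translation of degree $\leq k + n - m - l$ satisfied at $(\alpha, a', \bar{b'}_l)$ and thus, via $A$, at $(\beta, c', \bar{d'}_l)$, producing the required $d''$.

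Conditions \eqref{E:c2} and \eqref{E:c4} are the $\omega$-saturated back-and-forth steps: the desired successor(s) in $M^*$ are realized as a type whose finite realizability, if it failed, would assemble into an intuitionistic standard translation of degree $\leq k + n - m - l$ true at $(\alpha, a', \bar{b'}_l)$ and hence, via $A$, at $(\beta, c', \bar{d'}_l)$, contradicting the $\beta$-side witnesses --- the failing finite subtype yielding an intuitionistic $\forall$-translation in case \eqref{E:c4} and an intuitionistic universal implication in case \eqref{E:c2}. The main obstacle is Condition \eqref{E:c2}: the $\hat{A}$ requirement forces the realized type to pin down the full degree-$\leq k + n - m - l - 1$ intuitionistic theory of $(\beta, c'', \bar{d'}_l)$ both positively (via formulas true there) and negatively (via formulas false there), and the contradiction to a failing finite subtype must be extracted via an intuitionistic implication $\bigwedge_i p_i \to \bigvee_k q_k$ whose translation $\forall y(R(x, y) \to (\bigwedge_i \psi_i \to \bigvee_k \chi_k))$ has degree exactly $k + n - m - l$, precisely the budget permitted for transfer across $A$. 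Once the asimulation is in hand, $k$-asimulation invariance transfers truth of $\varphi$ from $(M^*, a, \bar{b}_n)$ to $(N^*, c, \bar{d}_n)$ and elementarity concludes.
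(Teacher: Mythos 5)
Your overall architecture (take the disjunction of all complete conjunctions, show each disjunct forces a $\leq$-link back to a point satisfying $\varphi$, and convert that link into a $k$-asimulation) is a reasonable reorganization of the paper's argument, and your handling of conditions \eqref{E:c1}, \eqref{E:c3} and \eqref{E:c4} follows the right pattern. But there is a genuine quantitative gap: you seed the construction with complete $(\varphi,(x,\bar{w}_n),k)$-conjunctions and hence with only a $\leq_{\varphi,n,k}$ link at the root, decrementing the degree budget to $k+n-m-l$ as objects are added. That budget is too small by $2$. At the deepest level where \eqref{E:c2} still applies ($m+l=n+k-1$, budget $1$), the implication you need to transfer is $ST(\bigwedge i_t \to \bigvee j_u, x)$ with the $i_t$ of translation degree $\leq 0$; if no atomic formula holds at the successor $c''$ there is nothing to put in the antecedent ($ST(\bot\to\bot,x)$ has degree $1$ and no longer fits), and the transfer cannot be expressed within the budget. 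This is not a patchable corner case of your particular relation: a $\leq_{\varphi,n,k}$ link at the root simply does not guarantee the existence of any $k$-asimulation. For instance, with $n=0$, $k=1$, let $a$ have $R$-successors each satisfying some predicate letter and let $c$ have the same successors plus one satisfying no letter; then $(M,a)\leq_{\varphi,0,1}(N,c)$, yet condition \eqref{E:c2} followed by \eqref{E:c1} in both directions forces $a$ to have a ``blank'' successor, so no $1$-asimulation exists. The paper avoids this by working with complete $(\varphi,(x,\bar{w}_n),k+2)$-conjunctions and the budget $n+k+2-m-l$, which keeps $ST(\bot\to\bot,x)$ and $ST(\bot,x)$ inside the inner sets $\Gamma,\Delta$ at every level where a back-and-forth condition is triggered; your proof needs the same $+2$ offset throughout.

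Two secondary remarks. First, the appeal to $\omega$-saturation is unnecessary here: by Lemma \ref{L:fin} the types you want to realize are finite up to logical equivalence, so finite satisfiability already yields a witness in the original models; the paper deliberately keeps the proof of this theorem saturation-free and reserves saturation for Theorem \ref{L:main}. Second, once the offset is corrected, your single-pass argument (every point of the disjunction is linked to a $\varphi$-point) and the paper's two-case argument (either no pair $\Psi'\models\Psi$ with $\Psi\wedge\varphi$ and $\Psi'\wedge\neg\varphi$ both satisfiable exists, in which case the disjunction works outright, or such a pair exists and an asimulation yields a contradiction with invariance) are essentially contrapositives of one another, so the reorganization itself is sound.
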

\begin{proof} We may assume that both $\varphi(x)$ and
$\neg\varphi(x)$ are satisfiable, since both $\bot$ and $\top$ are
obviously invariant with respect to $k$-asimulations and we have,
for example, the following valid formulas:

\begin{align*}
&\bot \leftrightarrow ST(\bot,x), \top \leftrightarrow ST(\bot \to
\bot, x).
\end{align*}

We  may also assume that there are two complete $(\varphi, (x,
\bar{w}_n), k + 2)$-conjunctions $\Psi(x, \bar{w}_n), \Psi'(x,
\bar{w}_n)$ such that $\Psi'(x, \bar{w}_n)\models\Psi(x,
\bar{w}_n)$, and both formulas $\Psi(x, \bar{w}_n) \wedge
\varphi(x, \bar{w}_n)$ and $\Psi'(x, \bar{w}_n) \wedge
\neg\varphi(x, \bar{w}_n)$ are satisfiable.

For suppose otherwise. Then take the set of all complete
$(\varphi, (x, \bar{w}_n),k + 2)$-conjunctions $\Psi(x,
\bar{w}_n)$ such that the formula $\Psi(x, \bar{w}_n) \wedge
\varphi(x, \bar{w}_n)$ is satisfiable. This set is non-empty,
because $\varphi(x, \bar{w}_n)$ is satisfiable, and by Lemma
\ref{L:conj-ex}, it can be satisfied only together with some
complete $(\varphi, (x, \bar{w}_n),k + 2)$-conjunction. Now, using
Lemma \ref{L:conj-fin}, choose in it a finite non-empty subset
$\{\,\Psi_{i_1}(x, \bar{w}_n)\ldots, \Psi_{i_n}(x, \bar{w}_n)\,\}$
such that any complete $(\varphi, (x, \bar{w}_n),k +
2)$-conjunction is equivalent to an element of this subset. We can
show that $\varphi(x, \bar{w}_n)$ is logically equivalent to
$\Psi_{i_1}(x, \bar{w}_n)\vee\ldots \vee\Psi_{i_n}(x, \bar{w}_n)$.
In fact, if $M, a, \bar{b}_n \models \varphi(x, \bar{w}_n)$ then,
by Lemma \ref{L:conj-ex}, at least one complete $(\varphi, (x,
\bar{w}_n),k + 2)$-conjunction is true at $(M, a, \bar{b}_n)$ and
therefore, its equivalent in $\{\,\Psi_{i_1}(x, \bar{w}_n)\ldots,
\Psi_{i_n}(x, \bar{w}_n)\,\}$ is also true at $(M, a, \bar{b}_n)$,
and so, finally we have $M, a, \bar{b}_n \models \Psi_{i_1}(x,
\bar{w}_n)\vee\ldots, \vee\Psi_{i_n}(x, \bar{w}_n)$. In the other
direction, if $M, a, \bar{b}_n \models \Psi_{i_1}(x,
\bar{w}_n)\vee\ldots, \vee\Psi_{i_n}(x, \bar{w}_n)$ then for some
$1 \leq j \leq n$ we have $M, a, \bar{b}_n \models \Psi_{i_j}(x,
\bar{w}_n)$. Then, since $\Psi_{i_j}(x,
\bar{w}_n)\models\Psi_{i_j}(x, \bar{w}_n)$ and since by the choice
of $\Psi_{i_j}(x, \bar{w}_n)$ the formula $\Psi_{i_j}(x,
\bar{w}_n) \wedge \varphi(x, \bar{w}_n)$ is satisfiable, so, by
our assumption, the formula $\Psi_{i_j}(x, \bar{w}_n) \wedge
\neg\varphi(x, \bar{w}_n)$ must be unsatisfiable, and hence
$\varphi(x, \bar{w}_n)$ must follow from $\Psi_{i_j}(x,
\bar{w}_n)$. But in this case we will have $M, a, \bar{b}_n
\models \varphi(x, \bar{w}_n)$ as well. So $\varphi(x, \bar{w}_n)$
is logically equivalent to $\Psi_{i_1}(x, \bar{w}_n)\vee\ldots,
\vee\Psi_{i_n}(x, \bar{w}_n)$ but the latter formula, being a
disjunction of conjunctions of standard $x$-translations of
intuitionistic formulas is itself a standard $x$-translation of an
intuitionistic formula and so we are done.

If, on the other hand, one can take two complete $(\varphi, (x,
\bar{w}_n),k + 2)$-conjunctions $\Psi(x, \bar{w}_n), \Psi'(x,
\bar{w}_n)$ such that $\Psi'(x, \bar{w}_n)\models\Psi(x,
\bar{w}_n)$, and formulas $\Psi(x, \bar{w}_n) \wedge \varphi(x,
\bar{w}_n)$ and $\Psi'(x, \bar{w}_n) \wedge \neg\varphi(x,
\bar{w}_n)$ are satisfiable, we reason as follows.

Take any $n$-ary evaluation $\Sigma_\varphi$-point $(M, a,
\bar{b}_n)$ such that both $M, a, \bar{b}_n \models \Psi(x,
\bar{w}_n) \wedge \varphi(x, \bar{w}_n)$ and for any
$(\Sigma_\varphi, (x, \bar{w}_n),k)$-formula $\psi(x, \bar{w}_n)$,
if $\psi(x, \bar{w}_n)$ is a standard $x$-translation of an
intuitionistic formula and $M, a, \bar{b}_n \models \psi(x,
\bar{w}_n)$, then $\Psi(x, \bar{w}_n) \models\psi(x, \bar{w}_n)$.
Then take any $n$-ary evaluation $\Sigma_\varphi$-point $(N, c,
\bar{d}_n)$ such that $N, c, \bar{d}_n \models \Psi'(x, \bar{w}_n)
\wedge \neg\varphi(x, \bar{w}_n)$.

We can construct a $\langle (M, a, \bar{b}_n), (N, c,
\bar{d}_n)\rangle_k$-asimulation and thus obtain a contradiction
in the following way.

Let $\alpha, \beta \in \{\,M, N\,\}$ and let $(\bar{a'}_m, a',
\bar{b'}_l) \in (D(\alpha)^{m+1} \times D(\alpha)^l)$ and
$(\bar{c'}_m,c'; \bar{d'}_l) \in (D(\beta)^{m+1} \times
D(\beta)^n)$. Then $(\bar{a'}_m, a'; \bar{b'}_l)A(\bar{c'}_m,c';
\bar{d'}_l)$ iff
\[
m + l\leq n + k \wedge (\alpha, a', \bar{b'}_l) \leq_{\varphi, l,
n + k + 2 - m - l} (\beta, c', \bar{d'}_l).
\]

By the choice of $\Psi(x, \bar{w}_n), \Psi'(x, \bar{w}_n)$ and the
independence of truth at an $n$-ary evaluation point from the
choice of free variables in a formula we obviously have $(a;
\bar{b}_n)A(c; \bar{d}_n)$. It remains to verify conditions
\eqref{E:c1}--\eqref{E:c4} of Definition \ref{D:k-asim}.

\emph{Verification of \eqref{E:c1}}. Since the degree of any
atomic formula is $0$, and the above condition implies that $ n +
k + 2 - m - l \geq 2$, it is evident that for any $(\bar{a'}_m,
a'; \bar{b'}_l)A(\bar{c'}_m,c'; \bar{d'}_l)$ and any predicate
letter $P\in\Sigma_\varphi\setminus\{\,R^2,E^2\,\}$ we have
$\alpha, a', \bar{b'}_l \models P(x, \bar{w}_l) \Rightarrow \beta,
c', \bar{d'}_l \models P(x, \bar{w}_l)$.

\emph{Verification of \eqref{E:c2}}. Assume then that for some
$(\bar{a'}_m, a'; \bar{b'}_l)A(\bar{c'}_m,c'; \bar{d'}_l)$ such
that $m + l < n + k$ there exists a $c'' \in D(\beta)$ such that
$c'R^\beta c''$. In this case we will also have $m + 1 + l \leq n
+ k$.

Then consider the following two sets:
\begin{align*}
&\Gamma = \{\,ST(i(\bar{w}_l), x) \mid ST(i(\bar{w}_l), x)\text{
is a $(\Sigma_\varphi, (x, \bar{w}_l), n + k
+ 1 - m - l)$-formula, }\beta, c'', \bar{d'}_l \models ST(i(\bar{w}_l), x)\,\};\\
&\Delta = \{\,ST(i(\bar{w}_l), x) \mid ST(i(\bar{w}_l), x)\text{
is a $(\Sigma_\varphi, (x, \bar{w}_l), n + k + 1 - m -
l)$-formula, }\beta, c'', \bar{d'}_l \models \neg ST(i(\bar{w}_l),
x)\,\}.
\end{align*}
These sets are non-empty, since by our assumption we have $n + k +
1 - m - l\geq 1$. Therefore, as we have $r(ST(\bot, x)) = 0$ and
$r(ST(\bot \to \bot, x)) = 1$, we will also have $ST(\bot, x) \in
\Delta$ and $ST(\bot \to \bot, x) \in \Gamma$. Then, according to
our Lemma \ref{L:fin}, there are finite non-empty sets of logical
equivalents for both $\Gamma$ and $\Delta$. Choosing these finite
sets, we in fact choose some finite
$\{\,ST(i_1(\bar{w}_l),x)\ldots ST(i_t(\bar{w}_l), x)\,\}
\subseteq \Gamma$, $\{\,ST(j_1(\bar{w}_l),x)\ldots
ST(j_u(\bar{w}_l), x)\,\} \subseteq \Delta$ such that
\begin{align*}
&\forall \psi(x,\bar{w}_l) \in
\Gamma(ST(i_1(\bar{w}_l),x)\wedge\ldots \wedge ST(i_t(\bar{w}_l),
x)
\models \psi(x,\bar{w}_l));\\
&\forall \chi(x,\bar{w}_l) \in \Delta(\chi(x,\bar{w}_l)\models
ST(j_1(\bar{w}_l),x)\vee\ldots \vee ST(j_u(\bar{w}_l), x)).
\end{align*}
But then we obtain that the formula
\[
ST((i_1(\bar{w}_l)\wedge\ldots \wedge i_t(\bar{w}_l)) \to
(j_1(\bar{w}_l)\vee\ldots \vee j_u(\bar{w}_l)), x)
\]
is false at $(\beta, c',\bar{d'}_l)$. In fact, $c''$ disproves
this implication for $(\beta, c',\bar{d'}_l)$. But  every formula
both in $\{\,ST(i_1(\bar{w}_l),x)\ldots ST(i_t(\bar{w}_l), x)\,\}$
and $\{\,ST(j_1(\bar{w}_l),x)\ldots ST(j_u(\bar{w}_l), x)\,\}$ is,
by their choice, a $(\Sigma_\varphi, x, n + k + 1 - m -
l)$-formula, and so standard translation of the implication under
consideration must be a $(\Sigma_\varphi,x,n + k + 2 - m -
l)$-formula. Note, further, that by $(\bar{a'}_m, a';
\bar{b'}_l)A(\bar{c'}_m,c'; \bar{d'}_l)$ we must have
\[
(\alpha, a', \bar{b'}_l) \leq_{\varphi, l, n + k + 2 - m - l}
(\beta, c', \bar{d'}_l),
\]
and therefore this implication must be false at $(\alpha,
a',\bar{b'}_l)$ as well. But then take any $a''$ such that
$a'R^\alpha a''$ and $(\alpha, a'', \bar{b'}_l)$ verifies the
conjunction in the antecedent of the formula but falsifies its
consequent. We must conclude then, by the choice of
$\{\,ST(i_1(\bar{w}_l),x)\ldots ST(i_t(\bar{w}_l), x)\,\}$, that
$\alpha, a'',\bar{b'}_l \models \Gamma$ and so, by the definition
of $A$, and given that $m + l + 1 \leq n + k$, that
$(\bar{c'}_m,c', c'';\bar{d'}_l)A(\bar{a'}_m,a', a'';\bar{b'}_l)$.
Since, in addition, $(\alpha, a'', \bar{b'}_l)$ disproves every
formula from $\{\,ST(j_1(\bar{w}_l),x)\ldots ST(j_u(\bar{w}_l),
x)\,\}$ then by the choice of this set we must conclude that every
$(\Sigma_\varphi,x,n + k + 1 - m - l)$-formula that is a standard
$x$-translation of an intuitionistic formula false at $(\beta, c',
\bar{d'}_l)$ is also false at $(\alpha, a'', \bar{b'}_l)$. But
then, again by the definition of $A$, and given the fact that $m +
l + 1 \leq n + k$, we must also have $(\bar{a'}_m,a',
a'';\bar{b'}_l)A(\bar{c'}_m,c', c'';\bar{d'}_l)$, and so condition
\eqref{E:c2} holds.

\emph{Verification of \eqref{E:c3}}. Assume then that for some
$(\bar{a'}_m, a'; \bar{b'}_l)A(\bar{c'}_m,c'; \bar{d'}_l)$ such
that $m + l < n + k$ there exists a $b'' \in D(\alpha)$ such that
$E^\alpha(a', b'')$. In this case we will also have $m + l + 1
\leq n + k$.

Then consider the following set:
\begin{align*}
&\Gamma = \{\,ST(i(\bar{w}_l,w'), x) \mid ST(i, x)\text{ is a
$(\Sigma_\varphi, (x, \bar{w}_l,w'), n + k + 1 - m - l)$-formula,
}\beta, a', \bar{b'}_l,b'' \models ST(i(\bar{w}_l,w'), x)\,\}.
\end{align*}
This set is non-empty, since by our assumption we have $n + k + 1
- m - l\geq 1$. Therefore, as we have $r(ST(\bot \to \bot, x)) =
1$, we will also have $ST(\bot \to \bot, x) \in \Gamma$. Then,
according to our Lemma \ref{L:fin}, there is a finite non-empty
set of logical equivalents for $\Gamma$. Choosing this finite set,
we in fact choose some finite $\{\,ST(i_1(\bar{w}_l,w'),x)\ldots
ST(i_t(\bar{w}_l,w'), x)\,\} \subseteq \Gamma$ such that
\begin{align*}
&\forall \psi(x,\bar{w}_l,w') \in
\Gamma(ST(i_1(\bar{w}_l,w'),x)\wedge\ldots \wedge
ST(i_t(\bar{w}_l,w'), x) \models \psi(x,\bar{w}_l,w')).
\end{align*}
But then we obtain that the formula
\[
ST(\exists w'(i_1(\bar{w}_l,w')\wedge\ldots \wedge
i_t(\bar{w}_l,w')), x)
\]
is true at $(\alpha, a',\bar{b'}_l)$. Moreover, every formula in
$\{\,ST(i_1(\bar{w}_l,w'),x)\ldots ST(i_t(\bar{w}_l,w'), x)\,\}$
is, by their choice, a $(\Sigma_\varphi, x, n + k + 1 - m -
l)$-formula, and so standard translation of the quantified
conjunction under consideration must be a $(\Sigma_\varphi,x,n + k
+ 2 - m - l)$-formula. Since we have, by $(\bar{a'}_m, a';
\bar{b'}_l)A(\bar{c'}_m,c'; \bar{d'}_l)$, that
\[
(\alpha, a', \bar{b'}_l) \leq_{\varphi, l, n + k + 2 - m - l}
(\beta, c', \bar{d'}_l),
\]
then the formula in question must be true at $(\beta,
c',\bar{d'}_l)$ as well. But then take any $d''$ such that
$E^\beta(c',d'')$ and $(\beta, c', \bar{d'}_l,d'')$ verifies a
standard translation of the conjunction after the existential
quantifier. We must conclude then, by the choice of
$\{\,ST(i_1(\bar{w}_l,w'),x)\ldots ST(i_t(\bar{w}_l,w'), x)\,\}$,
that $\beta, c',\bar{d'}_l,d'' \models \Gamma$ and so, by the
definition of $A$, and given that $m + l + 1 \leq n + k$, that
$(\bar{a'}_m,a';\bar{b'}_l,b'')A(\bar{c'}_m,c';\bar{d'}_l,d'')$.

\emph{Verification of \eqref{E:c4}}. Assume then that for some
$(\bar{a'}_m, a', \bar{b'}_l)A(\bar{c'}_m,c', \bar{d'}_l)$ such
that $m + l + 1 < n + k$ there exist some $c'', d'' \in D(\beta)$
such that $c'R^\beta c'' \wedge E^\beta(c'', d'')$, but there are
no $a'',b'' \in D(\alpha)$ such that $a'R^\alpha a'' \wedge
E^\alpha(a'', b'')$ and $(\bar{a'}_m, a',a''; \bar{b'}_l,
b'')A(\bar{c'}_m,c',c'';\bar{d'}_l, d'')$. In this case we will
have $m + 1 + l + 1 \leq n + k$.

Then consider the following set:
\begin{align*}
&\Delta = \{\,ST(i(\bar{w}_l,w'), x) \mid ST(i(\bar{w}_l,w'),
x)\text{ is a $(\Sigma_\varphi, (x, \bar{w}_l,w'), n + k - m -
l)$-formula, }\beta, c'', \bar{d'}_l,d'' \models \neg
ST(i(\bar{w}_l,w'), x)\,\}.
\end{align*}
This set is non-empty, since by our assumption we have $n + k - m
- l\geq 0$. Therefore, as we have $r(ST(\bot, x)) = 0$, we will
also have $ST(\bot, x) \in \Delta$. Then, according to our Lemma
\ref{L:fin}, there is a finite non-empty set of logical
equivalents for $\Delta$. Choosing this finite set, we in fact
choose some finite $\{\,ST(j_1(\bar{w}_l,w'),x)\ldots
ST(j_u(\bar{w}_l,w'), x)\,\} \subseteq \Delta$ such that
\begin{align*}
&\forall \chi(x,\bar{w}_l,w') \in
\Delta(\chi(x,\bar{w}_l,w')\models
ST(j_1(\bar{w}_l,w'),x)\vee\ldots \vee ST(j_u(\bar{w}_l,w'), x)).
\end{align*}
But then we obtain that the formula
\[
ST(\forall w'(j_1(\bar{w}_l,w')\vee\ldots \vee j_u(\bar{w}_l,w')),
x)
\]
is false at $(\beta, c',\bar{d'}_l)$. In fact, $c'',d''$ jointly
disprove standard translation of this universally quantified
disjunction for $(\beta, c',\bar{d'}_l)$. Further, every formula
in $\{\,ST(j_1(\bar{w}_l),x)\ldots ST(j_u(\bar{w}_l), x)\,\}$ is,
by their choice, a $(\Sigma_\varphi, x, n + k - m - l)$-formula,
and so standard translation of the universally quantified
disjunction under consideration must be a $(\Sigma_\varphi,x,n + k
+ 2 - m - l)$-formula. Since we have, by $(\bar{a'}_m, a';
\bar{b'}_l)A(\bar{c'}_m,c'; \bar{d'}_l)$, that
\[
(\alpha, a', \bar{b'}_l) \leq_{\varphi, l, n + k + 2 - m - l}
(\beta, c', \bar{d'}_l),
\]
then the formula in question must be false at $(\alpha,
a',\bar{b'}_l)$ as well. But then take any $a'',b''$ for which we
have $a'R^\alpha a''$ and $E^\alpha(a'',b'')$ such that
$(\alpha,a'', \bar{b'}_l,b'')$ falsifies standard translation of
the disjunction after the quantifier. We must conclude, by the
choice of $\{\,ST(j_1(\bar{w}_l,w'),x)\ldots ST(j_u(\bar{w}_l,w'),
x)\,\}$,
 that every
$(\Sigma_\varphi,x,n + k - m - l)$-formula that is a standard
$x$-translation of an intuitionistic formula false at $(\beta,
c'',\bar{d'}_l,d'')$ is also false at $(\alpha,
a'',\bar{b'}_l,b'')$. But then, again by the definition of $A$,
and given the fact that $m + 1 + l + 1 \leq n + k$, we must also
have $(\bar{a'}_m,a', a'';\bar{b'}_l,b'')A(\bar{c'}_m,c',
c'';\bar{d'}_l,d'')$, so condition \eqref{E:c4} is satisfied.
\end{proof}

\begin{theorem}\label{L:param}
A formula $\varphi(x, \bar{w}_n)$ is equivalent to a standard
$x$-translation of an intuitionistic formula iff there exists $k
\in \mathbb{N}$ such that $\varphi(x, \bar{w}_n)$ is invariant
with respect to $k$-asimulations.
\end{theorem}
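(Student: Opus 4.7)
The plan is to reduce both directions of the biconditional to earlier results: the left-to-right implication to Corollary \ref{L:c-k-inv}, and the right-to-left implication to Theorem \ref{L:t1}, with a small adjustment of parameters.

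For the forward direction I would assume $\varphi(x, \bar{w}_n) \equiv ST(i(\bar{w}_n), x)$ for some intuitionistic $i$, set $k := r(ST(i(\bar{w}_n), x))$, and apply Corollary \ref{L:c-k-inv} to conclude that $ST(i, x)$ is invariant under $k$-asimulations. The invariance then transfers to $\varphi$ because logical equivalence preserves truth at every $n$-ary evaluation point.

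The backward direction is the substantive part. Suppose $\varphi$ is invariant under $k_0$-asimulations for some $k_0$. Since Theorem \ref{L:t1} demands that the asimulation parameter equal $r(\varphi)$ exactly, I need to bridge the potential gap between $k_0$ and $r(\varphi)$. Two auxiliary observations will suffice. First, $k$-invariance is monotone in $k$: for $k' \geq k$, any $\langle (M, a, \bar{b}_n), (N, c, \bar{d}_n)\rangle_{k'}$-asimulation is automatically also a $\langle (M, a, \bar{b}_n), (N, c, \bar{d}_n)\rangle_k$-asimulation, since conditions \eqref{E:c2}--\eqref{E:c4} are imposed on a wider range of index pairs $(m, l)$ when the parameter grows; hence existence of the former entails existence of the latter, and preservation of $\varphi$ transfers. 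Second, any formula can be padded to any prescribed higher degree without loss of equivalence, e.g., by conjoining with $\tau_k := \forall y_1 \dots \forall y_k (R(y_1, y_1) \vee \neg R(y_1, y_1))$, a tautology of degree exactly $k$ whose only predicate letter $R^2$ already lies in $\Sigma_\varphi$.

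Combining these, put $k := \max(k_0, r(\varphi))$, so that $\varphi$ is $k$-invariant by monotonicity. If $r(\varphi) = k$, I would apply Theorem \ref{L:t1} to $\varphi$ directly; otherwise $r(\varphi) < k$, and the padded formula $\varphi' := \varphi \wedge \tau_k$ satisfies $\varphi' \equiv \varphi$, $r(\varphi') = k$, and, being equivalent to $\varphi$, is itself $k$-invariant. Theorem \ref{L:t1} applied to $\varphi'$ yields $\varphi' \equiv ST(i, x)$ for some intuitionistic $i$, whence $\varphi \equiv ST(i, x)$. The main obstacle is precisely the parameter mismatch just described; once monotonicity of invariance and degree-padding are recorded, the conclusion follows without further work.
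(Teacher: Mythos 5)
Your proposal is correct and follows essentially the same route as the paper: the forward direction via Corollary \ref{L:c-k-inv} plus transfer of invariance along logical equivalence, and the backward direction via monotonicity of $k$-invariance in $k$ together with degree-padding so that Theorem \ref{L:t1} applies. The only difference is cosmetic: you pad by conjoining a degree-$k$ tautology $\tau_k$, whereas the paper prefixes $\varphi$ with $k - r(\varphi)$ vacuous universal quantifiers; both devices produce a logically equivalent formula of degree exactly $k$ over the same vocabulary.
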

\begin{proof} Let $\varphi(x, \bar{w}_n)$ be equivalent to
$ST(i(\bar{w}_n),x)$. Then by Corollary \ref{L:c-k-inv},
$ST(i(\bar{w}_n),x)$ is invariant with respect to
$r(ST(i(\bar{w}_n),x))$-asimulations, and, therefore, so is
$\varphi(x, \bar{w}_n)$. In the other direction, let $\varphi(x,
\bar{w}_n)$ be invariant with respect to $k$-asimulations for some
$k$. If $k \leq r(\varphi(x, \bar{w}_n))$, then every
$r(\varphi(x, \bar{w}_n))$-asimulation is a $k$-asimulation,
therefore, $\varphi(x, \bar{w}_n)$ is invariant with respect to
$r(\varphi(x, \bar{w}_n))$-asimulations, and so, by Theorem
\ref{L:t1}, $\varphi(x, \bar{w}_n)$ is logically equivalent to a
standard $x$-translation of an intuitionistic formula. If, on the
other hand,  $r(\varphi(x, \bar{w}_n)) < k$, then set $l = k -
r(\varphi(x, \bar{w}_n))$ and consider a sequence $\bar{y}_l$ of
variables not occurring in $\varphi(x, \bar{w}_n)$. Formula
$\forall\bar{y}_l\varphi(x, \bar{w}_n)$ is logically equivalent to
$\varphi(x, \bar{w}_n)$, hence $\forall\bar{y}_l\varphi(x,
\bar{w}_n)$ is invariant with respect to $k$-asimulations as well.
But we have $r(\forall\bar{y}_l\varphi(x, \bar{w}_n)) = k$, so, by
Theorem \ref{L:t1}, $\forall\bar{y}_l\varphi(x, \bar{w}_n)$ is
logically equivalent to a standard $x$-translation of an
intuitionistic formula. Hence $\varphi(x, \bar{w}_n)$ is
equivalent to this translation, too. \end{proof}

\section{The main result}\label{S:Main}

We begin by introducing a somewhat simpler, unparametrized version
of asimulation:
\begin{definition}\label{D:asim}
Let $(M,a, \bar{b}_n)$, $(N,c,\bar{d}_n)$ be two $n$-ary
evaluation $\Theta$-points. A binary relation
\[
A \subseteq \bigcup_{n \geq 0}(((D(M)\times D(M)^n) \times (D(N)
\times D(N)^n)) \cup ((D(N)\times D(N)^n) \times (D(M) \times
D(M)^n))),
\]
is called $\langle (M,a, \bar{b}_n),
(N,c,\bar{d}_n)\rangle$-asimulation iff $(a; \bar{b}_n)A(c;
\bar{d}_n)$ and for any $\alpha, \beta \in \{\,M, N\,\}$, any
$(a';\bar{b'}_l) \in D(\alpha) \times D(\alpha)^l$,
$(c';\bar{d'}_l) \in  D(\beta) \times D(\beta)^l$, whenever we
have $(a';\bar{b'}_l)A(c';\bar{d'}_l)$, the following conditions
hold:

\begin{align}
&\forall P \in \Theta\setminus\{\,R^2,E^2\,\}(\alpha, a',
\bar{b'}_l\models P(x, \bar{w}_l)
\Rightarrow \beta, c', \bar{d'}_l\models P(x, \bar{w}_l))\label{E:c11}\\
&(c'' \in D(\beta) \wedge c'R^\beta c'') \Rightarrow\notag\\
&\Rightarrow \exists a'' \in D(\alpha)(a'R^\alpha a'' \wedge
(c'';\bar{d'}_l)\hat{A}(a'';\bar{b'}_l));\label{E:c22}\\
&(b'' \in D(\alpha) \wedge E^\alpha(a', b''))  \Rightarrow\notag\\
&\Rightarrow \exists d'' \in D(\beta)(E^\beta(c',d'') \wedge
(a';\bar{b'}_l, b'')A(c';\bar{d'}_l, d''));\label{E:c33}\\
&(c'', d'' \in D(\beta) \wedge c'R^\beta c''\wedge E^\beta(c'', d'')) \Rightarrow\notag\\
&\Rightarrow \exists a'',b'' \in D(\alpha)(a'R^\alpha a''\wedge
E^\alpha(a'',b'') \wedge (a'';\bar{b'}_l, b'')A(c'';\bar{d'}_l,
d'')).\label{E:c44}
\end{align}
\end{definition}
\begin{lemma}\label{L:k-asim1}
Let $A$ be an $\langle (M,a, \bar{b}_n),
(N,c,\bar{d}_n)\rangle$-asimulation, and let
\[
A' = \{\,\langle(\bar{a'}_m, a';\bar{b'}_l),(\bar{c'}_m,
c';\bar{d'}_l)\rangle\mid (a';\bar{b'}_l)A(c';\bar{d'}_l)\,\}.
\]
Then $A'$ is an $\langle (M,a, \bar{b}_n),
(N,c,\bar{d}_n)\rangle_k$-asimulation for any $k \in \mathbb{N}$.
\end{lemma}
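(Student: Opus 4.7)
The plan is to verify all four clauses of Definition \ref{D:k-asim} for $A'$ by direct appeal to the corresponding clauses of Definition \ref{D:asim} for $A$. The guiding observation is that, by construction, membership of $\langle(\bar{a'}_m, a';\bar{b'}_l),(\bar{c'}_m, c';\bar{d'}_l)\rangle$ in $A'$ depends only on the distinguished last element $a'$ (resp.\ $c'$) together with the parameter tuple $\bar{b'}_l$ (resp.\ $\bar{d'}_l$); the prefixes $\bar{a'}_m, \bar{c'}_m$ are inert padding. Since the atomic clause \eqref{E:c1} and the back-and-forth clauses \eqref{E:c2}--\eqref{E:c4} likewise never touch these prefixes, the unparametrized clauses \eqref{E:c11}--\eqref{E:c44} can be invoked as they stand, and the cardinality bounds $m + l < n + k$ and $m + l + 1 < n + k$ can simply be discarded because the conditions on $A$ impose no analogous restriction.

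First I would check the initial condition $(a;\bar{b}_n)A'(c;\bar{d}_n)$, which is precisely the $m = 0$ instance of the defining clause of $A'$ and reduces to the hypothesis $(a;\bar{b}_n)A(c;\bar{d}_n)$. Then, fixing an arbitrary pair $\langle(\bar{a'}_m, a';\bar{b'}_l),(\bar{c'}_m, c';\bar{d'}_l)\rangle \in A'$ with underlying $(a';\bar{b'}_l)A(c';\bar{d'}_l)$, I would verify the four clauses in turn. Clause \eqref{E:c1} is immediate from \eqref{E:c11}, since the truth of $P(x,\bar{w}_l)$ is evaluated at $(\alpha, a', \bar{b'}_l)$ and does not involve the prefix at all. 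For \eqref{E:c2}, an application of \eqref{E:c22} to the given $c''$ produces $a''$ with $a'R^\alpha a''$ and $(c'';\bar{d'}_l)\hat{A}(a'';\bar{b'}_l)$; by the definition of $A'$, now with prefixes $\bar{a'}_m, a'$ and $\bar{c'}_m, c'$ of length $m+1$, this lifts to $(\bar{c'}_m, c', c'';\bar{d'}_l)\hat{A'}(\bar{a'}_m, a', a'';\bar{b'}_l)$. Clauses \eqref{E:c3} and \eqref{E:c4} are dispatched identically by invoking \eqref{E:c33} and \eqref{E:c44}.

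I do not foresee any real obstacle: the whole argument amounts to observing that $A'$ is $A$ decorated with passive prefix-tuples and that the parametrized definition is, clause for clause, a size-restricted weakening of the unparametrized one. The only bookkeeping point is to keep track of which prefix length is in play at each invocation (the prefix grows from $m$ to $m+1$ in \eqref{E:c2} and \eqref{E:c4}, and stays at $m$ in \eqref{E:c3}), so that the symmetry expressed by ``$\hat{}$'' in \eqref{E:c22} and \eqref{E:c44} is transferred correctly to $A'$ in \eqref{E:c2} and \eqref{E:c4}.
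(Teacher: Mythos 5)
Your proposal is correct and follows essentially the same route as the paper's own proof: reduce each clause \eqref{E:c1}--\eqref{E:c4} for $A'$ to the corresponding clause \eqref{E:c11}--\eqref{E:c44} for $A$, noting that the prefixes are inert and that the numerical bounds occur only as extra hypotheses and so may be ignored. Nothing is missing.
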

\begin{proof} We obviously have  $(a; \bar{b}_n)A'(c;
\bar{d}_n)$, and since for any $\alpha, \beta \in \{\,M, N\,\}$,
and any $(\bar{a'}_m, a';\bar{b'}_l)$ in $D(\alpha)^{m+1}\times
D(\alpha)^l$, $(\bar{c'}_m, c';\bar{d'}_l)$ in
$D(\beta)^{m+1}\times D(\beta)^l$ such that $(\bar{a'}_m,
a';\bar{b'}_l)A'(\bar{c'}_m, c';\bar{d'}_l)$ we have
$(a';\bar{b'}_l)A(c';\bar{d'}_l)$, condition \eqref{E:c1} for $A'$
follows from the fulfilment of condition \eqref{E:c11} for $A$. So
it remains to verify that the other three conditions hold for $A'$
for every $k$.

\emph{Condition \eqref{E:c2}}: If $(\bar{a'}_m,
a';\bar{b'}_l)A'(\bar{c'}_m, c';\bar{d'}_l)$ then
$(a';\bar{b'}_l)A(c';\bar{d'}_l)$, and if, further, $c'' \in
D(\beta)$ and $c'R^\beta c''$ then by condition \eqref{E:c22} we
can choose $a'' \in D(\alpha)$ such that $a'R^\alpha a''$, and
$(c'';\bar{d'}_l)\hat{A}(a'';\bar{b'}_l)$. But then, by definition
of $A'$ we will also have $(\bar{c'}_m,
c',c'';\bar{d'}_l)\hat{A'}(\bar{a'}_m, a',a'';\bar{b'}_l)$.

\emph{Condition \eqref{E:c3}}: If $(\bar{a'}_m,
a';\bar{b'}_l)A'(\bar{c'}_m, c';\bar{d'}_l)$ then
$(a';\bar{b'}_l)A(c';\bar{d'}_l)$, and if, further, $b'' \in
D(\alpha)$ and $E^\alpha(a',b'')$ then by condition \eqref{E:c33}
we can choose $d'' \in D(\beta)$ such that $E^\beta(c',d'')$, and
$(a';\bar{b'}_l,b'')A(c';\bar{d'}_l,d'')$. But then, by definition
of $A'$ we will also have $(\bar{a'}_m,
a';\bar{b'}_l,b'')A'(\bar{c'}_m, c';\bar{d'}_l,d'')$.

\emph{Condition \eqref{E:c4}}: If $(\bar{a'}_m,
a';\bar{b'}_l)A'(\bar{c'}_m, c';\bar{d'}_l)$ then
$(a';\bar{b'}_l)A(c';\bar{d'}_l)$, and if, further, $c'',d'' \in
D(\beta)$,  $c'R^\beta c''$  and $E^\beta(c'',d'')$ then by
condition \eqref{E:c44} we can choose $a'',b'' \in D(\alpha)$ such
that $a'R^\beta a''$, $E^\alpha(a'',b'')$, and
$(a'';\bar{b'}_l,b'')A(c'';\bar{d'}_l,d'')$. But then, by
definition of $A'$ we will also have $(\bar{a'}_m,
a',a'';\bar{b'}_l,b'')A'(\bar{c'}_m, c',c'';\bar{d'}_l,d'')$.
\end{proof}
\begin{definition}\label{D:inv}
A formula $\varphi(x, \bar{w}_n)$ is invariant with respect to
asimulations iff for any $\Theta$ such that $\Sigma_\varphi
\subseteq \Theta$, any $n$-ary evaluation $\Theta$-points $(M,a,
\bar{b}_n)$ and $(N,c,\bar{d}_n)$, if there exists an $\langle
(M,a, \bar{b}_n), (N,c,\bar{d}_n)\rangle$-asimulation $A$ and $M,
a, \bar{b}_n \models \varphi(x, \bar{w}_n)$, then $N, c, \bar{d}_n
\models \varphi(x, \bar{w}_n)$.
\end{definition}
\begin{corollary}\label{L:c-inv}
If $\varphi(x, \bar{w}_n)$ is equivalent to a standard
$x$-translation of an intuitionistic formula, then $\varphi(x,
\bar{w}_n)$ is invariant with respect to asimulations.
\end{corollary}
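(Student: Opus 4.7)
The plan is to chain together Lemma~\ref{L:k-asim1} with the parametrized invariance result already established (Corollary~\ref{L:c-k-inv}).

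Suppose $\varphi(x, \bar{w}_n)$ is equivalent to $ST(i(\bar{w}_n), x)$ for some intuitionistic formula $i$. Fix $\Theta \supseteq \Sigma_\varphi$, two $n$-ary evaluation $\Theta$-points $(M, a, \bar{b}_n)$ and $(N, c, \bar{d}_n)$, and an $\langle (M, a, \bar{b}_n), (N, c, \bar{d}_n)\rangle$-asimulation $A$; also assume $M, a, \bar{b}_n \models \varphi(x, \bar{w}_n)$. I want to conclude that $N, c, \bar{d}_n \models \varphi(x, \bar{w}_n)$.

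First I would set $k = r(ST(i(\bar{w}_n), x))$ and invoke Lemma~\ref{L:k-asim1} to promote $A$ to the relation
\[
A' = \{\,\langle(\bar{a'}_m, a';\bar{b'}_l),(\bar{c'}_m, c';\bar{d'}_l)\rangle \mid (a';\bar{b'}_l)A(c';\bar{d'}_l)\,\},
\]
which, by that lemma, is an $\langle (M, a, \bar{b}_n), (N, c, \bar{d}_n)\rangle_k$-asimulation (for any $k$, in particular for our chosen one). Then, since $\varphi$ is equivalent to the standard translation $ST(i(\bar{w}_n), x)$ of degree $k$, Corollary~\ref{L:c-k-inv} tells us that $\varphi$ is invariant with respect to $k$-asimulations. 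Applying this invariance to $A'$ yields $N, c, \bar{d}_n \models \varphi(x, \bar{w}_n)$, as required.

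There is essentially no obstacle: all the hard work has already been done in Lemma~\ref{L:asim} (which underlies Corollary~\ref{L:c-k-inv}) and in Lemma~\ref{L:k-asim1}. The only thing to check is that replacing $A$ by $A'$ does not lose the starting link: since $(a;\bar{b}_n)A(c;\bar{d}_n)$ holds by hypothesis, the definition of $A'$ directly gives $(a;\bar{b}_n)A'(c;\bar{d}_n)$, so $A'$ genuinely is a $k$-asimulation between the designated points. The result then follows in one line.
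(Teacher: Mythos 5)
Your proof is correct and is essentially the paper's own argument: both hinge on Lemma~\ref{L:k-asim1} to promote the given asimulation $A$ to a $k$-asimulation $A'$ and then invoke the parametrized invariance result. The only difference is presentational --- the paper argues contrapositively and cites Theorem~\ref{L:param}, whereas you argue directly via Corollary~\ref{L:c-k-inv} (plus the routine observation that invariance transfers across logical equivalence), which amounts to the same thing.
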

\begin{proof} Let $\varphi(x, \bar{w}_n)$ be not invariant with
respect to asimulations, and  let $A$ be an $\langle (M,a,
\bar{b}_n), (N,c,\bar{d}_n)\rangle$-asimulation such that $M, a,
\bar{b}_n \models \varphi(x, \bar{w}_n)$, but not $N, c,\bar{d}_n
\models \varphi(x, \bar{w}_n)$. Let $A'$ be defined as in Lemma
\ref{L:k-asim1}. Then by this Lemma $A'$ is an $\langle (M,a,
\bar{b}_n), (N,c,\bar{d}_n)\rangle_k$-asimulation for any $k \in
\mathbb{N}$. Hence, by Theorem \ref{L:param}, $\varphi(x,
\bar{w}_n)$ cannot be equivalent to a standard $x$-translation of
an intuitionistic formula. \end{proof}

To proceed further, we need to introduce some notions and results
from classical model theory. For a model $M$ and $\bar{a}_n \in
D(M)$ let $[M, \bar{a}_n]$ be the extension of $M$ with
$\bar{a}_n$ as new individual constants denoting themselves. It is
easy to see that there is a simple relation between truth of a
formula at a $\Theta$-evaluation point and truth of its
substitution instance in an extension of the above-mentioned kind;
namely, for any $\Theta$-model $M$, every $\Theta$-formula
$\varphi(\bar{y}_n,\bar{w}_m)$ and any $\bar{a}_n,\bar{b}_m \in
D(M)$ it holds that:

\[
[M, \bar{a}_n], \bar{b}_m \models \varphi(\bar{a}_n,\bar{w}_m)
\Leftrightarrow M, \bar{a}_n, \bar{b}_m \models
\varphi(\bar{y}_n,\bar{w}_m).
\]

We will call a theory of $M$ (and write $Th(M)$) the set of all
first-order sentences true at $M$. We will call an $n$-type of $M$
a set of formulas $\Gamma(\bar{w}_n)$ consistent with $Th(M)$.

\begin{definition}
Let $M$ be a $\Theta$-model. $M$ is $\omega$-saturated iff for all
$k \in \mathbb{N}$ and for all $\bar{a}_n \in D(M)$, every
$k$-type $\Gamma(\bar{w}_k)$ of $[M, \bar{a}_n]$ is satisfiable in
$[M, \bar{a}_n]$.
\end{definition}

Definition of $\omega$-saturation normally requires satisfiability
of $1$-types only. However, our modification is equivalent to the
more familiar version: see e.g. \cite[Lemma 4.31, p. 73]{Doe}.

It is known that every model can be elementarily extended to an
$\omega$-saturated model; in other words, the following lemma
holds:

\begin{lemma}\label{L:ext}
Let $M$ be a $\Theta$-model. Then there is an $\omega$-saturated
extension $N$ of $M$ such that for all $\bar{a}_n \in D(M)$ and
every $\Theta$-formula $\varphi(\bar{w}_n)$:
\[
M, \bar{a}_n \models \varphi(\bar{w}_n) \Leftrightarrow N,
\bar{a}_n \models \varphi(\bar{w}_n).
\]
\end{lemma}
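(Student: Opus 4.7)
The plan is to obtain $N$ as the union of an elementary chain $M = M_0 \preceq M_1 \preceq M_2 \preceq \cdots$ of $\Theta$-models, arranged so that every finitely-parametrized type which could threaten $\omega$-saturation in the union is already realized at some finite stage.

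First I would prove a one-step lemma: given any $\Theta$-model $P$, there is an elementary extension $P^*$ such that for every finite $\bar{a}_n \in D(P)$ and every $1$-type $\Gamma(w)$ of $[P, \bar{a}_n]$, $\Gamma$ is realized in $[P^*, \bar{a}_n]$. The construction expands the language with a fresh individual constant $c_{\bar{a}_n, \Gamma}$ for each such pair and takes the theory consisting of $Th([P, \bar{b}])$ for every finite $\bar{b} \in D(P)$ together with $\{\gamma(c_{\bar{a}_n, \Gamma}) \mid \gamma \in \Gamma\}$ for every type. Finite satisfiability of this theory in $P$ itself follows directly from the consistency of each individual $\Gamma$ with $Th([P, \bar{a}_n])$, since different type-witnesses can be chosen independently. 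Compactness then yields a model whose $\Theta$-reduct is an elementary extension of $P$ (by inclusion of $Th([P, \bar{b}])$ for all $\bar{b}$) and realizes the prescribed types.

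Next I would iterate: set $M_0 = M$, $M_{i+1} = M_i^*$, and let $N = \bigcup_{i \in \omega} M_i$ with $R^N$, $E^N$ and each $P^{N}$ obtained as the corresponding unions. The Tarski--Vaught lemma for elementary chains gives $M_i \preceq N$ for every $i$, and in particular the displayed equivalence for $M = M_0$. For $\omega$-saturation, fix $\bar{a}_n \in D(N)$ and a $k$-type $\Gamma(\bar{w}_k)$ of $[N, \bar{a}_n]$. Finiteness of $\bar{a}_n$ puts it inside some $D(M_i)$, and elementarity along the chain makes $\Gamma$ also a $k$-type of $[M_i, \bar{a}_n]$. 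By the remark immediately preceding the lemma (citing \cite[Lemma 4.31, p. 73]{Doe}), $\omega$-saturation reduces to realization of arbitrary $1$-types, and these are realized in $M_{i+1}$ by construction and hence, again by elementarity, in $N$.

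The main obstacle is the compactness step in the one-step lemma: one has to check that enlarging $Th(P)$ simultaneously with all Henkin-style axioms, indexed over every finite parameter list and every $1$-type over that list, preserves finite satisfiability. Once one observes that any finite fragment mentions only finitely many of the new constants, each attached to a single type that is by assumption consistent with the relevant elementary diagram, witnesses can be placed independently inside $P$, so the fragment is satisfied in an expansion of $P$ itself. The remaining ingredients — union of an elementary chain, reduction from $k$-types to $1$-types, and transfer across the final inclusion $M \preceq N$ — are entirely standard and require no new work.
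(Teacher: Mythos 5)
Your argument is correct, but it is worth noting that the paper does not actually prove this lemma: it simply remarks that the statement is a trivial corollary of Lemma 5.1.14 of Chang and Keisler, so there is no in-paper proof to compare against beyond that citation. What you have written is, in effect, the standard proof of that cited result: a one-step compactness argument (the completeness of $Th([P,\bar{a}_n])$ is what turns consistency of a finite fragment of a type into an actual witness inside $P$, which is the point you correctly isolate as the only delicate step), followed by an $\omega$-length elementary chain, the Tarski--Vaught union theorem, and the observation that a finite tuple of the union already lives in some finite stage, so that the relevant types are realized one stage later. Your appeal to the reduction from $k$-types to $1$-types is also legitimate, since the paper itself invokes exactly that reduction (citing Doets) in the paragraph preceding the lemma. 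So your proposal supplies a self-contained construction where the paper offers only a pointer to the literature; the paper's approach buys brevity, yours buys independence from the external reference at the cost of about a page of standard model theory. No gaps.
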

The latter lemma is a trivial corollary of e.g. \cite[Lemma
5.1.14, p. 216]{ChK}.

In what follows we adopt the following notation for the fact that
for any $x$ all $\Theta$-formulas that are standard
$x$-translations of intuitionistic formulas true at $(M,a,
\bar{b}_n)$, are also true at $(N,c, \bar{d}_n)$:
\[
(M,a, \bar{b}_n) \leq_{\Theta} (N,c, \bar{d}_n).
\]

\begin{lemma}\label{L:sat}
Let $\Theta \subseteq \Sigma$, let $M$, $N$ be $\omega$-saturated
$\Theta$-models and let $(M,a, \bar{b}_n) \leq_{\Theta}
(N,c,\bar{d}_n)$. Then relation $A$ such that for any $\alpha,
\beta \in \{\,M, N\,\}$, any $(a';\bar{b'}_l) \in D(\alpha) \times
D(\alpha)^l$, $(c';\bar{d'}_l) \in  D(\beta) \times D(\beta)^l$
\[
(a';\bar{b'}_l)A(c';\bar{d'}_l) \Leftrightarrow (\alpha,a',
\bar{b'}_l) \leq_{\Theta} (\beta,c', \bar{d'}_l)
\]
is an $\langle (M,a,
\bar{b}_n),(N,c,\bar{d}_n)\rangle$-asimulation.\footnote{This
definition of $A$ makes sense only when $D(M) \cap D(N) =
\varnothing$. However, the latter can always be assumed without a
loss of generality.}
\end{lemma}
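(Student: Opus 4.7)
The plan is to verify the four conditions of Definition \ref{D:asim} for the relation $A$ defined in the statement. The base relationship $(a;\bar{b}_n)A(c;\bar{d}_n)$ holds immediately by hypothesis. For condition \eqref{E:c11}, I would appeal to the surjectivity assumption on the intuitionistic-to-classical predicate correspondence from Section \ref{S:Prel}: every $P \in \Theta \setminus \{R^2, E^2\}$ arises as $P_0'$ for some intuitionistic atom $P_0$, so $P(x, \bar{w}_l)$ equals $ST(P_0(\bar{w}_l), x)$ and preservation is built into $\leq_\Theta$.

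The three back-and-forth clauses \eqref{E:c22}--\eqref{E:c44} all follow a common recipe: assemble a first-order type in the appropriate $\omega$-saturated model, establish finite satisfiability by packaging each finite subset into a single intuitionistic formula whose standard translation transfers through $\leq_\Theta$ (sometimes contrapositively), and then realize the type by $\omega$-saturation. For \eqref{E:c33}, the type over $[\beta, c', \bar{d'}_l]$ in variable $w'$ consists of $E(c', w')$ together with all $ST(i(\bar{w}_l, w'), x)$ that hold at $(\alpha, a', \bar{b'}_l, b'')$; a finite positive subset $ST(i_1), \ldots, ST(i_p)$ is satisfied because $ST(\exists w'(i_1 \wedge \ldots \wedge i_p), x)$ is true at $(\alpha, a', \bar{b'}_l)$ by choice of $b''$ and transfers positively to $(\beta, c', \bar{d'}_l)$. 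For \eqref{E:c44}, I would take the type over $[\alpha, a', \bar{b'}_l]$ in $x, w'$ consisting of $R(a', x)$, $E(x, w')$, and $\neg ST(i, y)[x/y]$ for every intuitionistic $i$ false at $(\beta, c'', \bar{d'}_l, d'')$; finite satisfiability comes from observing that $ST(\forall w'(j_1 \vee \ldots \vee j_q), x)$ is falsified at $(\beta, c', \bar{d'}_l)$ via the witnesses $c'', d''$, hence contrapositively at $(\alpha, a', \bar{b'}_l)$.

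The main obstacle will be \eqref{E:c22}, since it is the only clause demanding the bidirectional closure $\hat{A}$; accordingly, the type for $a''$ over $[\alpha, a', \bar{b'}_l]$ must simultaneously contain $R(a', x)$, the positive translations $ST(i(\bar{w}_l), y)[x/y]$ of every intuitionistic $i$ true at $(\beta, c'', \bar{d'}_l)$, and the negations of those false there. To establish finite satisfiability of a mixed subset with positive part $ST(i_1), \ldots, ST(i_p)$ and negative part $\neg ST(j_1), \ldots, \neg ST(j_q)$, I plan to combine everything into the single intuitionistic implication $i_1 \wedge \ldots \wedge i_p \to (j_1 \vee \ldots \vee j_q)$: its standard translation is falsified at $(\beta, c', \bar{d'}_l)$ by the witness $c''$, so by the contrapositive of $(\alpha, a', \bar{b'}_l) \leq_\Theta (\beta, c', \bar{d'}_l)$ it is also falsified at $(\alpha, a', \bar{b'}_l)$, which yields the required $a'' \in D(\alpha)$. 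Realizing the full type by $\omega$-saturation then produces an $a''$ that matches $c''$ on every intuitionistic standard translation in both directions, which is exactly $(a''; \bar{b'}_l)\hat{A}(c''; \bar{d'}_l)$.
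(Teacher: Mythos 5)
Your proposal is correct and follows essentially the same route as the paper's own proof: atomic preservation via surjectivity of the predicate correspondence for condition \eqref{E:c11}, and for each back-and-forth clause the assembly of a type (positive and negative translations for \eqref{E:c22}, positive only for \eqref{E:c33}, negative only for \eqref{E:c44}), finite satisfiability obtained by packaging finite subsets into a single intuitionistic implication, existential, or universal formula and transferring it through $\leq_\Theta$ (contrapositively where needed), and realization by $\omega$-saturation. The only detail the paper makes explicit that you leave implicit is the use of $\bot$ and $\bot \to \bot$ to handle finite subsets with empty positive or negative part, which your argument accommodates without difficulty.
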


\begin{proof} Throughout this proof every formula mentioned is supposed to be a $\Theta$-formula.
It is obvious that $(a; \bar{b}_n)A(c;
\bar{d}_n)$, and since for any predicate letter $P$ distinct from
$R^2, E^2$ and variables $x,\bar{w}_n$ formula $P(x,\bar{w}_n)$ is
a standard $x$-translation of an atomic intuitionistic formula,
condition \eqref{E:c11} is trivially satisfied for $A$.

To verify \emph{condition \eqref{E:c22}}, choose any $\alpha,
\beta \in \{\,M, N\,\}$, any $(a';\bar{b'}_l) \in D(\alpha) \times
D(\alpha)^l$, $(c';\bar{d'}_l) \in  D(\beta) \times D(\beta)^l$
such that $(\alpha,
a',\bar{b'}_l)\leq_{\Theta}(\beta,c',\bar{d'}_l)$ and choose any
$c'' \in D(\beta)$ for which we have  $c'R^\beta c''$.

Then choose any variables $x, \bar{w}_n$ and consider the
following two sets:
\begin{align*}
&\Gamma = \{\,i(\bar{w}_l) \mid \beta, c'',\bar{d'}_l \models ST(i(\bar{w}_l), x)\,\};\\
&\Delta = \{\,i(\bar{w}_l) \mid \beta, c'',\bar{d'}_l \models \neg
ST(i(\bar{w}_l), x)\,\}.
\end{align*}
We have by the choice of $\Gamma$, $\Delta$ that for every finite
$\Gamma' \subseteq \Gamma$ and $\Delta' \subseteq \Delta$ the
formula $ST(\bigwedge(\Gamma') \to \bigvee(\Delta'), x)$ is
disproved by $c''$ for $(\beta, c',\bar{d'}_l)$. So, by our
premise that $(\alpha,
a',\bar{b'}_l)\leq_{\Theta}(\beta,c',\bar{d'}_l)$, the standard
translation of every such implication must be false at $(\alpha,
a',\bar{b'}_l)$ as well. This means that every finite subset of
the set
\[
\{\,R(a', x)\,\} \cup \{\,ST(i(\bar{b'}_l),x)\mid
i(\bar{w}_l)\in\Gamma\,\} \cup \{\,\neg ST(i(\bar{b'}_l),x)\mid
i(\bar{w}_l)\in\Delta\,\}
\]
 is satisfiable at $[\alpha, a',\bar{b'}_l]$. (We set $\Delta' = \{\,ST(\bot, x)\,\}$ if the finite set in
question has an empty intersection with $\Delta$ and $\Gamma' =
\{\,ST(\bot \to \bot, x)\,\}$ if it has an empty intersection with
$\Gamma$.) Therefore, by compactness of first-order logic, this
set is consistent with $Th([\alpha, a',\bar{b'}_l])$ and, by
$\omega$-saturation of both $M$ and $N$ it must be satisfied in
$[\alpha, a',\bar{b'}_l]$ by some $a'' \in D(\alpha)$. So for any
such $a''$ we will have $a'R^\alpha a''$ and, moreover
\[
\alpha, a'',\bar{b'}_l \models \{\,ST(i(\bar{w}_l),x)\mid
i(\bar{w}_l)\in\Gamma\,\} \cup \{\,\neg ST(i(\bar{w}_l),x)\mid
i(\bar{w}_l)\in\Delta\,\}.
\]
Thus, by choice of $\Gamma$ and $\Delta$ plus independence of
truth at a pointed model from the choice of free variables in a
formula we will have both $(\alpha,
a'',\bar{b'}_l)\leq_{\Theta}(\beta,c'',\bar{d'}_l)$ and
$(\beta,c'',\bar{d'}_l)\leq_{\Theta}(\alpha, a'',\bar{b'}_l)$ and
condition \eqref{E:c22} is verified.

To verify \emph{condition \eqref{E:c33}}, choose any $\alpha,
\beta \in \{\,M, N\,\}$, any $(a';\bar{b'}_l) \in D(\alpha) \times
D(\alpha)^l$, $(c';\bar{d'}_l) \in  D(\beta) \times D(\beta)^l$
such that $(\alpha,
a',\bar{b'}_l)\leq_{\Theta}(\beta,c',\bar{d'}_l)$ and choose any
$b'' \in D(\alpha)$ for which we have $E^\alpha(a',b'')$.

Then choose any variables $x, \bar{w}_n,w'$ and consider the
following set:
\begin{align*}
&\Gamma = \{\,i(\bar{w}_l,w') \mid\alpha, a',\bar{b'}_l,b''
\models ST(i(\bar{w}_l,w'), x)\,\}.
\end{align*}
We have by the choice of $\Gamma$ that for every finite $\Gamma'
\subseteq \Gamma$ the formula $ST(\exists w'\bigwedge(\Gamma'),
x)$ is verified by $b''$ for $(\alpha, a',\bar{b'}_l)$. So, by our
premise that $(\alpha,
a',\bar{b'}_l)\leq_{\Theta}(\beta,c',\bar{d'}_l)$, the standard
translation of every such quantified conjunction must be true at
$(\beta, c',\bar{d'}_l)$ as well. This means that every finite
subset of the set
\[
\{\,E(c', w')\,\} \cup \{\,ST(i(\bar{d'}_l,w'),c')\mid
i(\bar{w}_l,w')\in\Gamma\,\}
\]
 is satisfiable at $[\beta, c',\bar{d'}_l]$. Therefore, by compactness of first-order logic, this
set is consistent with $Th([\beta, c',\bar{d'}_l])$ and, by
$\omega$-saturation of both $M$ and $N$, it must be satisfied in
$[\beta, c',\bar{d'}_l]$ by some $d'' \in D(\beta)$. So for any
such $d''$ we will have $E^\beta(c',d'')$ and, moreover
\[
\beta, c',\bar{d'}_l,d'' \models \{\,ST(i(\bar{w}_l,w'),x)\mid
i(\bar{w}_l,w')\in\Gamma\,\}.
\]
Thus, by choice of $\Gamma$ plus independence of truth at a
pointed model from the choice of free variables in a formula we
will have $(\alpha,
a',\bar{b'}_l,b'')\leq_{\Theta}(\beta,c',\bar{d'}_l,d'')$
 and condition
\eqref{E:c33} is verified.

To verify \emph{condition \eqref{E:c44}}, choose any $\alpha,
\beta \in \{\,M, N\,\}$, any $(a';\bar{b'}_l) \in D(\alpha) \times
D(\alpha)^l$, $(c';\bar{d'}_l) \in  D(\beta) \times D(\beta)^l$
such that $(\alpha,
a',\bar{b'}_l)\leq_{\Theta}(\beta,c',\bar{d'}_l)$ and choose any
$c'',d'' \in D(\beta)$ for which we have  $c'R^\beta c''$ and
$E^\beta(c'',d'')$.

Then choose any variables $x, \bar{w}_n, w'$ and consider the
following set:
\begin{align*}
&\Delta = \{\,i(\bar{w}_l,w') \mid\beta, c'',\bar{d'}_l,d''
\models \neg ST(i(\bar{w}_l,w'), x)\,\}.
\end{align*}
We have by the choice of $\Delta$ that for every finite $\Delta'
\subseteq \Delta$ the formula $ST(\forall w'\bigvee(\Delta'), x)$
is disproved by $c'',d''$ for $(\beta, c',\bar{d'}_l)$. So, by our
premise that $(a';\bar{b'}_l)\leq_{\Theta}(c';\bar{d'}_l)$, the
standard translation of every such quantified disjunction must be
false at $(\alpha, a',\bar{b'}_l)$ as well. This means that every
finite subset of the set
\[
\{\,R(a', x), E(x,w')\,\} \cup \{\,\neg ST(i(\bar{b'}_l,w'),x)\mid
i(\bar{w}_l,w')\in\Delta\,\}
\]
 is satisfiable at $[\alpha, a',\bar{b'}_l]$. Therefore, by compactness of first-order logic, this
set is consistent with $Th([\alpha, a',\bar{b'}_l])$ and, by
$\omega$-saturation of both $M$ and $N$, it must be satisfied in
$[\alpha, a',\bar{b'}_l]$ by some $a'',b'' \in D(\alpha)$. So for
any such $a''$ and $b''$ we will have $a'R^\alpha a''$,
$E^\alpha(a'',b'')$ and, moreover
\[
\alpha, a'',\bar{b'}_l,b'' \models \{\,\neg
ST(i(\bar{w}_l,w'),x)\mid i(\bar{w}_l,w')\in\Delta\,\}.
\]
Thus, by choice of $\Delta$ plus independence of truth at a
pointed model from the choice of free variables in a formula we
will have $(\alpha,
a'',\bar{b'}_l,b'')\leq_{\Theta}(\beta,c'',\bar{d'}_l,d'')$ and
condition \eqref{E:c44} is verified. \end{proof}

We are prepared now to state and prove our main result.

\begin{theorem}\label{L:main}
Let $\varphi(x, \bar{w}_n)$ be invariant with respect to
asimulations. Then $\varphi(x, \bar{w}_n)$ is equivalent to a
standard $x$-translation of an intuitionistic formula.
\end{theorem}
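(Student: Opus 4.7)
The plan is to follow the classical Lindström-style characterization pattern, adapted to the asymmetric intuitionistic setting. I would define $T = \{\,ST(i(\bar{w}_n), x) \mid \varphi(x, \bar{w}_n) \models ST(i(\bar{w}_n), x)\,\}$, the set of all standard $x$-translations of intuitionistic formulas entailed by $\varphi$, and aim to show $T \models \varphi(x, \bar{w}_n)$. Once this is established, compactness of first-order logic yields finitely many $\psi_1, \ldots, \psi_m \in T$ whose conjunction entails $\varphi(x, \bar{w}_n)$; since standard translations commute with conjunction, $\psi_1 \wedge \cdots \wedge \psi_m$ is itself a standard translation, so $\varphi(x, \bar{w}_n)$ is equivalent to a standard translation as required.

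The key auxiliary fact to establish is a one-sided semantic preservation property for $\varphi$: whenever $(M, a, \bar{b}_n) \leq_{\Sigma_\varphi} (N, c, \bar{d}_n)$ and $M, a, \bar{b}_n \models \varphi(x, \bar{w}_n)$, we have $N, c, \bar{d}_n \models \varphi(x, \bar{w}_n)$. I would prove this by invoking Lemma \ref{L:ext} to pass to $\omega$-saturated elementary extensions $M^*$ of $M$ and $N^*$ of $N$; elementarity preserves both the truth of $\varphi$ at the distinguished tuple and the $\leq_{\Sigma_\varphi}$ relation between the distinguished tuples. Lemma \ref{L:sat} then supplies a canonical $\langle (M^*, a, \bar{b}_n), (N^*, c, \bar{d}_n) \rangle$-asimulation, and the hypothesis of invariance transports truth of $\varphi$ from $(M^*, a, \bar{b}_n)$ to $(N^*, c, \bar{d}_n)$, which in turn transfers back to $(N, c, \bar{d}_n)$ by elementarity.

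To deduce $T \models \varphi(x, \bar{w}_n)$, suppose $N, c, \bar{d}_n \models T$ and consider $\Sigma = \{\,\neg ST(j(\bar{w}_n), x) \mid N, c, \bar{d}_n \models \neg ST(j(\bar{w}_n), x)\,\}$. I would show that $\Sigma \cup \{\varphi(x, \bar{w}_n)\}$ is satisfiable: otherwise, by compactness, $\varphi$ would entail a finite disjunction $ST(j_1(\bar{w}_n), x) \vee \cdots \vee ST(j_k(\bar{w}_n), x)$ of translations false at $(N, c, \bar{d}_n)$, but since standard translations commute with disjunction, this disjunction equals $ST(j_1 \vee \cdots \vee j_k, x)$, placing it in $T$ and contradicting $N, c, \bar{d}_n \models T$. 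Choosing a witness $(M, a, \bar{b}_n) \models \Sigma \cup \{\varphi(x, \bar{w}_n)\}$, I read off $(M, a, \bar{b}_n) \leq_{\Sigma_\varphi} (N, c, \bar{d}_n)$ from the construction of $\Sigma$, and the preservation property from the previous paragraph delivers $N, c, \bar{d}_n \models \varphi(x, \bar{w}_n)$.

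The main obstacle I anticipate is the compactness argument in the third paragraph: it crucially exploits closure of the set of standard translations under both conjunction (to collapse the final entailment into a single translation) and disjunction (to convert the hypothetical unsatisfiability into membership in $T$). The asymmetry of $\leq_{\Sigma_\varphi}$, unlike a symmetric bisimulation equivalence, is precisely what forces the one-sided formulation of preservation, but this meshes cleanly with the one-directional clauses \eqref{E:c11}--\eqref{E:c44} of Definition \ref{D:asim} and with the reach of Lemma \ref{L:sat}.
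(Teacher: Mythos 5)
Your proposal is correct and follows essentially the same route as the paper's own proof: define the set of intuitionistic consequences of $\varphi$, use a compactness argument exploiting closure of standard translations under disjunction to find a $\leq_{\Sigma_\varphi}$-related witness of $\varphi$, pass to $\omega$-saturated elementary extensions via Lemma \ref{L:ext}, apply Lemma \ref{L:sat} to obtain an asimulation, and invoke invariance, finishing with compactness and closure under conjunction. Your packaging of the middle step as a standalone one-sided preservation property is a slightly cleaner organization of the same argument, not a different method.
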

\begin{proof} We may assume that $\varphi(x, \bar{w}_n)$ is
satisfiable, for $\bot$ is clearly invariant with respect to
asimulations and $\bot \leftrightarrow ST(\bot, x)$ is a valid
formula. In what follows we will write $IC(\varphi(x, \bar{w}_n))$
for the set of $\Sigma_\varphi$-formulas in variables $x,
\bar{w}_n$ that are standard $x$-translations of intuitionistic
formulas following from $\varphi(x, \bar{w}_n))$. For any $n$-ary
evaluation $\Sigma_\varphi$-point $(M, a, \bar{b}_n)$ we will
denote the set of $\Sigma_\varphi$-formulas  in variables $x,
\bar{w}_n$ that are standard $x$-translations of intuitionistic
formulas true at $(M, a, \bar{b}_n)$, or \emph{intuitionistic
$\Sigma_\varphi$-theory} of $(M, a, \bar{b}_n)$ by $IT_\varphi(M,
a, \bar{b}_n)$. It is obvious that for any $n$-ary evaluation
$\Sigma_\varphi$-points $(M, a, \bar{b}_n)$ and $(N, c,
\bar{d}_n)$ we will have $(M, a, \bar{b}_n)
\leq_{\Sigma_\varphi}(N, c, \bar{d}_n)$ if and only if
$IT_\varphi(M, a, \bar{b}_n) \subseteq IT_\varphi(N, c,
\bar{d}_n)$.

Our strategy will be to show that $IC(\varphi(x, \bar{w}_n))
\models \varphi(x, \bar{w}_n)$. Once this is done we will apply
compactness of first-order logic and conclude that $\varphi(x,
\bar{w}_n)$ is equivalent to a finite conjunction of standard
$x$-translations of intuitionistic formulas and hence to a
standard $x$-translation of the corresponding intuitionistic
conjunction.

To show this, take any $n$-ary evaluation $\Sigma_\varphi$-point
$(M, a, \bar{b}_n)$ such that $M, a, \bar{b}_n \models
IC(\varphi(x, \bar{w}_n))$. Such a model exists, because
$\varphi(x, \bar{w}_n)$ is satisfiable and $IC(\varphi(x,
\bar{w}_n))$ will be satisfied in any pointed model satisfying
$\varphi(x, \bar{w}_n)$. Then we can also choose an $n$-ary
evaluation $\Sigma_\varphi$-point $(N, c, \bar{d}_n)$ such that
$N, c, \bar{d}_n \models \varphi(x, \bar{w}_n)$ and $IT_\varphi(N,
c, \bar{d}_n) \subseteq IT_\varphi(M, a, \bar{b}_n)$.

For suppose otherwise. Then for any $n$-ary evaluation
$\Sigma_\varphi$-point $(N, c, \bar{d}_n)$ such that $N, c,
\bar{d}_n \models \varphi(x, \bar{w}_n)$ we can choose an
intuitionistic formula $i_{(N, c, \bar{d}_n)}(\bar{w}_n)$ such
that $ST(i_{(N, c, \bar{d}_n)}(\bar{w}_n), x)$ is a
$\Sigma_\varphi$-formula true at $(N, c, \bar{d}_n)$ but not at
$(M, a, \bar{b}_n)$. Then consider the set
\[
S = \{\,\varphi(x, \bar{w}_n)\,\} \cup \{\,\neg ST(i_{(N, c,
\bar{d}_n)}(\bar{w}_n), x)\mid N, c, \bar{d}_n \models \varphi(x,
\bar{w}_n)\,\}
\]
Let $\{\,\varphi(x, \bar{w}_n), \neg ST(i_1(\bar{w}_n), x)\ldots ,
\neg ST(i_u(\bar{w}_n), x)\,\}$ be a finite subset of this set. If
this set is unsatisfiable, then we must have $\varphi(x) \models
ST(i_1(\bar{w}_n), x)\vee\ldots \vee ST(i_u(\bar{w}_n), x)$, but
then we will also have $(ST(i_1(\bar{w}_n), x)\vee\ldots \vee
ST(i_u(\bar{w}_n), x)) \in IC(\varphi(x, \bar{w}_n)) \subseteq
IT_\varphi(M, a, \bar{b}_n)$, and hence $(ST(i_{(N_1, b_1)},
x)\vee\ldots \vee ST(i_{(N_u, b_u)}, x))$ will be true at $(M, a,
\bar{b}_n)$. But then at least one of $ST(i_1(\bar{w}_n), x)\ldots
,ST(i_u(\bar{w}_n), x)$ must also be true at $(M, a, \bar{b}_n)$,
which contradicts the choice of these formulas. Therefore, every
finite subset of $S$ is satisfiable, and by compactness $S$ itself
is satisfiable as well. But then take any pointed
$\Sigma_\varphi$-model $(N',c', \bar{d'}_n)$ of $S$ and this will
be a model for which we will have both $N', c', \bar{d'}_n \models
ST(i_{(N',c', \bar{d'}_n)}(\bar{w}_n), x)$ by choice of
$i_{(N',c', \bar{d'}_n)}$ and $N',c', \bar{d'}_n \models \neg
ST(i_{(N',c', \bar{d'}_n)}(\bar{w}_n), x)$ by the satisfaction of
$S$, a contradiction.

Therefore, we will assume in the following that $(M, a,
\bar{b}_n)$, $(N,c, \bar{d}_n)$ are $n$-ary evaluation
$\Sigma_\varphi$-points, $M, a, \bar{b}_n \models IC(\varphi(x,
\bar{w}_n))$, $N,c, \bar{d}_n \models \varphi(x, \bar{w}_n)$, and
$IT_\varphi(N,c, \bar{d}_n) \subseteq IT_\varphi(M, a,
\bar{b}_n)$. Then, according to Lemma \ref{L:ext}, consider
$\omega$-saturated elementary extensions $M'$, $N'$ of $M$ and
$N$, respectively. We have:
\begin{align}
&M, a, \bar{b}_n \models \varphi(x, \bar{w}_n) \Leftrightarrow M',
a, \bar{b}_n \models
\varphi(x,\bar{w}_n)\label{E:m1}\\
&N', c, \bar{d}_n \models \varphi(x,\bar{w}_n)\label{E:m2}
\end{align}
Also since $M'$, $N'$ are elementarily equivalent to $M$, $N$ we
have
\[
IT_\varphi(N',c, \bar{d}_n) = IT_\varphi(N,c, \bar{d}_n) \subseteq
IT_\varphi(M, a, \bar{b}_n) = IT_\varphi(M', a, \bar{b}_n).
\]
But then we have $(N',c, \bar{d}_n) \leq_{\Sigma_\varphi} (M',a,
\bar{b}_n)$, and, by $\omega$-saturation of $M'$, $N'$, relation
$A$ as defined in Lemma \ref{L:sat} is an $\langle(N',c,
\bar{d}_n),(M',a, \bar{b}_n)\rangle$-asimulation. But then by
\eqref{E:m2} and asimulation invariance of $\varphi(x,\bar{w}_n)$
we get $M', a, \bar{b}_n \models \varphi(x,\bar{w}_n)$, and
further, by \eqref{E:m1} we conclude that $M, a, \bar{b}_n \models
\varphi(x,\bar{w}_n)$. Therefore, $\varphi(x,\bar{w}_n)$ in fact
follows from $IC(\varphi(x,\bar{w}_n))$. \end{proof}

The following theorem is an immediate consequence of Corollary
\ref{L:c-inv} and Theorem \ref{L:main}:
\begin{theorem}\label{L:final}
A formula $\varphi(x,\bar{w}_n)$ is invariant with respect to
asimulations iff it is equivalent to a standard $x$-translation of
an intuitionistic formula.
\end{theorem}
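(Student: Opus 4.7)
The plan is essentially bookkeeping: Theorem \ref{L:final} is a biconditional whose two directions have already been established elsewhere in the paper, so the only task is to combine them. I would simply cite Corollary \ref{L:c-inv} for the direction from standard translations to asimulation-invariance, and Theorem \ref{L:main} for the converse.

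For the direction ``equivalent to a standard $x$-translation $\Rightarrow$ invariant with respect to asimulations,'' Corollary \ref{L:c-inv} provides exactly what is needed. Recall that its proof proceeds contrapositively: if $\varphi(x,\bar{w}_n)$ is not asimulation-invariant, then from a witnessing asimulation $A$ one applies Lemma \ref{L:k-asim1} to lift $A$ to an $\langle(M,a,\bar{b}_n),(N,c,\bar{d}_n)\rangle_k$-asimulation $A'$ for every $k\in\mathbb{N}$, and then Theorem \ref{L:param} rules out $\varphi$ being equivalent to any standard $x$-translation.

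For the direction ``invariant with respect to asimulations $\Rightarrow$ equivalent to a standard $x$-translation,'' Theorem \ref{L:main} already does the full job. Its proof reduces matters to showing $IC(\varphi(x,\bar{w}_n)) \models \varphi(x,\bar{w}_n)$, then exhibits for each $(M,a,\bar{b}_n) \models IC(\varphi)$ a companion point $(N,c,\bar{d}_n) \models \varphi$ with $IT_\varphi(N,c,\bar{d}_n) \subseteq IT_\varphi(M,a,\bar{b}_n)$ (via a compactness argument on the set $S$), passes to $\omega$-saturated elementary extensions using Lemma \ref{L:ext}, and invokes Lemma \ref{L:sat} to turn the $\leq_{\Sigma_\varphi}$ relation into a genuine asimulation. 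Asimulation-invariance then transfers $\varphi$ back to $(M,a,\bar{b}_n)$, and first-order compactness converts $IC(\varphi) \models \varphi$ into the desired equivalence with a finite conjunction of standard $x$-translations.

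Because both halves are already in hand, there is no obstacle to overcome here; I would simply write a one-line proof remarking that Theorem \ref{L:final} is the conjunction of Corollary \ref{L:c-inv} and Theorem \ref{L:main}. The genuinely difficult work lies upstream, in Theorem \ref{L:main}, where the combined use of $\omega$-saturation, Lemma \ref{L:sat}, and compactness is what actually carries the characterization.
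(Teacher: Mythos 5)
Your proposal is correct and matches the paper exactly: the paper itself states Theorem \ref{L:final} as an immediate consequence of Corollary \ref{L:c-inv} and Theorem \ref{L:main}, which is precisely the one-line combination you describe.
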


\section{Criteria for first-order definable classes}\label{S:Rest}

Theorem \ref{L:final} stated above establishes a criterion for the
equivalence of first-order formula to a standard translation of
intuitionistic formula on arbitrary first-order models. But one
may have a special interest in a proper subclass $K$ of the class
of first-order models viewing the models which are not in this
subclass as irrelevant, non-intended etc. In this case one may be
interested in the criterion for equivalence of a given first-order
formula to a standard translation of an intuitionistic predicate
formula \emph{over} this particular subclass. It turns out that if
some parts of this subclass are first-order axiomatizable then
only a slight modification of our general criterion is necessary
to solve this problem.

To tighten up on terminology, we introduce the following
definitions:
\begin{definition}\label{D:k}
Let $K$ be a class of models. Then:
\begin{enumerate}
\item $K(\Theta) = \{\,M \in K\mid K\text{ is a
$\Theta$-model}\,\}$; \item $K(\Theta)$ is first-order
axiomatizable iff there is a set $Ax$ of $\Theta$-sentences, such
that a $\Theta$-model $M$ is in $K$ iff $M \models Ax$; \item A
set $\Gamma$ of $\Theta$-formulas is $K$-satisfiable iff $\Gamma$
is satisfied by some model in $K$; \item A $\Theta$-formula
$\varphi$ $K$-follows from $\Gamma$ $(\Gamma \models_K \varphi)$
iff $\Gamma \cup \{\,\varphi\,\}$ is $K$-unsatisfiable; \item
$\Theta$-formulas $\varphi$ and $\psi$ are $K$-equivalent iff
$\varphi \models_K \psi$ and $\psi \models_K \varphi$.
\end{enumerate}
\end{definition}
It is clear that for any class $K$, such that $Ax$ first-order
axiomatizes $K(\Theta)$, any set $\Gamma$ of $\Theta$-formulas and
any $\Theta$-formula $\varphi$, $\Gamma$ is $K$-satisfiable iff
$\Gamma \cup Ax$ is satisfiable, and $\Gamma \models_K \varphi$
iff $\Gamma \cup Ax \models \varphi$.
\begin{definition}\label{D:int-inv}
A formula $\varphi(x, \bar{w}_n)$ is $K$-invariant with respect to
asimulations iff for any $\Theta$ such that $\Sigma_\varphi
\subseteq \Theta$, any $n$-ary evaluation $\Theta$-points $(M, a,
\bar{b}_n)$ and $(N,c, \bar{d}_n)$, if $M, N \in K$, there exists
an $\langle (M, a, \bar{b}_n),(N,c, \bar{d}_n)\rangle$-asimulation
$A$, and $M, a, \bar{b}_n \models \varphi(x, \bar{w}_n)$, then $N,
c, \bar{d}_n \models \varphi(x, \bar{w}_n)$.
\end{definition}
Now for the criterion of $K$-equivalence:
\begin{theorem}\label{L:int-main}
Let $K$ be a class of first-order models such that $K(\Theta)$ is
first-order axiomatizablefor any finite $\Theta$, and let
$\varphi(x, \bar{w}_n)$ be $K$-invariant with respect to
asimulations. Then $\varphi(x, \bar{w}_n)$ is $K$-equivalent to a
standard $x$-translation of an intuitionistic formula.
\end{theorem}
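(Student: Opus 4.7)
The plan is to mimic the proof of Theorem \ref{L:main} with every compactness and $\omega$-saturation argument relativized to the class $K$, using first-order axiomatizability of $K(\Theta)$ to reduce everything to ordinary first-order compactness. Let $Ax$ denote a set of $\Sigma_\varphi$-sentences axiomatizing $K(\Sigma_\varphi)$, and define $IC_K(\varphi(x,\bar{w}_n))$ to be the set of $\Sigma_\varphi$-formulas in variables $x,\bar{w}_n$ that are standard $x$-translations of intuitionistic formulas and that $K$-follow from $\varphi(x,\bar{w}_n)$. As in Theorem \ref{L:main}, the target is to show $IC_K(\varphi(x,\bar{w}_n)) \models_K \varphi(x,\bar{w}_n)$, and then apply compactness to $IC_K(\varphi(x,\bar{w}_n)) \cup Ax \models \varphi(x,\bar{w}_n)$ to replace $IC_K(\varphi(x,\bar{w}_n))$ by a finite conjunct, obtaining $K$-equivalence with a standard translation of the corresponding intuitionistic conjunction. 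The case of $K$-unsatisfiable $\varphi(x,\bar{w}_n)$ is handled by the $K$-equivalence $\varphi(x,\bar{w}_n) \leftrightarrow ST(\bot,x)$.

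To prove $IC_K(\varphi(x,\bar{w}_n)) \models_K \varphi(x,\bar{w}_n)$, fix $(M,a,\bar{b}_n)$ with $M \in K$ and $M,a,\bar{b}_n \models IC_K(\varphi(x,\bar{w}_n))$. First I would show that there exists $(N,c,\bar{d}_n)$ with $N \in K$, $N,c,\bar{d}_n \models \varphi(x,\bar{w}_n)$, and $IT_\varphi(N,c,\bar{d}_n) \subseteq IT_\varphi(M,a,\bar{b}_n)$. This is done exactly as in the proof of Theorem \ref{L:main}: assuming otherwise, one forms the set
\[
S = \{\,\varphi(x,\bar{w}_n)\,\} \cup Ax \cup \{\,\neg ST(i_{(N,c,\bar{d}_n)}(\bar{w}_n),x)\mid N \in K,\; N,c,\bar{d}_n \models \varphi(x,\bar{w}_n)\,\},
\]
observes that unsatisfiability of a finite subset forces a disjunction of standard translations to lie in $IC_K(\varphi(x,\bar{w}_n))$ (hence to be true at $(M,a,\bar{b}_n)$), and then derives a contradiction via ordinary compactness on $S$ together with the fact that any model of $S$ is in $K$ (by $Ax \subseteq S$).

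Next I take $\omega$-saturated elementary extensions $M', N'$ of $M$ and $N$ given by Lemma \ref{L:ext}. The key observation enabling the use of $K$-invariance is that $M', N' \in K$: since $M \models Ax$ and $M \equiv M'$, we have $M' \models Ax$, and similarly for $N'$. By elementary equivalence, $IT_\varphi(N',c,\bar{d}_n) = IT_\varphi(N,c,\bar{d}_n) \subseteq IT_\varphi(M,a,\bar{b}_n) = IT_\varphi(M',a,\bar{b}_n)$, so $(N',c,\bar{d}_n) \leq_{\Sigma_\varphi} (M',a,\bar{b}_n)$. Lemma \ref{L:sat} then supplies an $\langle (N',c,\bar{d}_n),(M',a,\bar{b}_n)\rangle$-asimulation, and since $M', N' \in K$ and $N',c,\bar{d}_n \models \varphi(x,\bar{w}_n)$, the hypothesis of $K$-invariance gives $M',a,\bar{b}_n \models \varphi(x,\bar{w}_n)$, whence $M,a,\bar{b}_n \models \varphi(x,\bar{w}_n)$ by elementary equivalence.

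The main obstacle is ensuring that all auxiliary models produced during the argument actually belong to $K$: for the initial transfer of $\varphi(x,\bar{w}_n)$ I need the witness $(N,c,\bar{d}_n)$ to be in $K$ (handled by inserting $Ax$ into $S$), and for the asimulation argument I need the $\omega$-saturated extensions $M', N'$ to be in $K$ (handled by first-order axiomatizability together with Lemma \ref{L:ext}). Once both issues are resolved, the proof of Theorem \ref{L:main} transfers essentially verbatim, with $\models$ and $IC$ replaced by their $K$-relativized versions throughout.
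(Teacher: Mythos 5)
Your proposal is correct and follows essentially the same route as the paper's own proof: relativize $IC(\varphi)$ to $K$, use the axiomatization $Ax$ of $K(\Sigma_\varphi)$ to reduce $K$-compactness to ordinary compactness (both for extracting the finite conjunction and for the set $S$), and then run the saturation argument of Theorem \ref{L:main}, noting that the $\omega$-saturated elementary extensions still satisfy $Ax$ and hence lie in $K$ so that $K$-invariance applies. The only differences are cosmetic (you fold $Ax$ into $S$ from the start, and you make explicit the membership $M', N' \in K$ that the paper leaves implicit in its ``reasoning exactly as in Theorem \ref{L:main}'' step).
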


\begin{proof} Let $Ax_\varphi$ be the set of first-order
sentences that axiomatizes $K(\Sigma_\varphi)$. We may assume that
$\varphi(x, \bar{w}_n)$ is $K(\Sigma_\varphi)$-satisfiable,
otherwise $\varphi(x, \bar{w}_n)$ is $K$-equivalent to $ST(\bot,
x)$ and we are done. In what follows we will write $KC(\varphi(x,
\bar{w}_n))$ for the set of $\Sigma_\varphi$-formulas in variables
$x, \bar{w}_n$ that are standard $x$-translations of
intuitionistic formulas $K$-following from $\varphi(x,\bar{w}_n)$.

Our strategy will be to show that $KC(\varphi(x,\bar{w}_n))
\models_K \varphi(x,\bar{w}_n)$. Once this is done we will
conclude that
\[
Ax_\varphi \cup KC(\varphi(x,\bar{w}_n)) \models
\varphi(x,\bar{w}_n).
\]
Then we apply compactness of first-order logic and conclude that
$\varphi(x,\bar{w}_n)$ is equivalent to a finite conjunction
$\psi_1(x,\bar{w}_n)\wedge\ldots \wedge\psi_m(x,\bar{w}_n)$ of
formulas from this set. But it follows then that
$\varphi(x,\bar{w}_n)$ is $K$-equivalent to the conjunction of the
set $KC(\varphi(x)) \cap \{\,\psi_1(x,\bar{w}_n)\ldots,
\psi_m(x,\bar{w}_n)\,\}$. In fact, by our choice of
$KC(\varphi(x,\bar{w}_n))$ we have
\[
\varphi(x,\bar{w}_n) \models_K \bigwedge(KC(\varphi(x,\bar{w}_n))
\cap \{\,\psi_1(x,\bar{w}_n)\ldots, \psi_m(x,\bar{w}_n)\,\}),
\]
And by our choice of $\psi_1(x,\bar{w}_n)\ldots,
\psi_m(x,,\bar{w}_n)$ we have

\[
Ax_\varphi \cup (KC(\varphi(x,\bar{w}_n)) \cap
\{\,\psi_1(x,\bar{w}_n)\ldots, \psi_m(x,\bar{w}_n)\,\}) \models
\varphi(x,\bar{w}_n)
\]
and hence
\[
KC(\varphi(x,\bar{w}_n)) \cap \{\,\psi_1(x,\bar{w}_n)\ldots,
\psi_m(x,,\bar{w}_n)\,\} \models_K \varphi(x,\bar{w}_n).
\]

To show that $KC(\varphi(x,\bar{w}_n)) \models_K
\varphi(x,\bar{w}_n)$, take any $n$-ary evaluation
$\Sigma_\varphi$-point $(M, a, \bar{b}_n)$ such that $M \in K$ and
$M, a, \bar{b}_n \models KC(\varphi(x,\bar{w}_n))$. Such a model
exists, because $\varphi(x,\bar{w}_n)$ is
$K(\Sigma_\varphi)$-satisfiable and $KC(\varphi(x,\bar{w}_n))$
will be $K$-satisfied in any $n$-ary evaluation
$\Sigma_\varphi$-point satisfying $\varphi(x,\bar{w}_n)$. Then we
can also choose an $n$-ary evaluation $\Sigma_\varphi$-point $(N,
c, \bar{d}_n)$ such that $N \in K$ and $N, c, \bar{d}_n \models
\varphi(x,\bar{w}_n)$ and $IT_\varphi(N, c, \bar{d}_n) \subseteq
IT_\varphi(M, a, \bar{b}_n)$.

For suppose otherwise. Then for any $\Sigma_\varphi$-model $N \in
K$ and any $n$-ary evaluation $\Sigma_\varphi$-point $(N, c,
\bar{d}_n)$ such that $N, c, \bar{d}_n \models
\varphi(x,\bar{w}_n)$ we can choose an intuitionistic formula
$i_{(N, c, \bar{d}_n)}(\bar{w}_n)$ such that $ST(i_{(N, c,
\bar{d}_n)}(\bar{w}_n), x)$ is a $\Sigma_\varphi$-formula true at
$(N, c, \bar{d}_n)$ but not at $(M, a, \bar{b}_n)$. Then consider
the set
\[
S = \{\,\varphi(x,\bar{w}_n)\,\} \cup \{\,\neg ST(i_{(N,c,
\bar{d}_n)}(\bar{w}_n), x)\mid N \in K \wedge N, c, \bar{d}_n
\models \varphi(x,\bar{w}_n)\,\}
\]
Let $\{\,\varphi(x,\bar{w}_n), \neg ST(i_1(\bar{w}_n), x)\ldots ,
\neg ST(i_u(\bar{w}_n), x)\,\}$ be a finite subset of this set. If
this set is $K$-unsatisfiable, then we must have
\[
\varphi(x,\bar{w}_n) \models_K ST(i_1(\bar{w}_n), x)\vee\ldots
\vee ST(i_u(\bar{w}_n), x),
\]
but then we will also have
\[
(ST(i_1(\bar{w}_n), x)\vee\ldots \vee ST(i_u(\bar{w}_n), x)) \in
KC(\varphi(x,\bar{w}_n)) \subseteq IT_\varphi(M, a, \bar{b}_n),
\]
and hence $(ST(i_1(\bar{w}_n), x)\vee\ldots \vee
ST(i_u(\bar{w}_n), x))$ will be true at $(M, a, \bar{b}_n)$. But
then at least one of $ST(i_1(\bar{w}_n), x)\ldots
,ST(i_u(\bar{w}_n), x)$ must also be true at $(M, a, \bar{b}_n)$,
which contradicts the choice of these formulas. Therefore, every
finite subset of $S$ is $K$-satisfiable. But then every finite
subset of the set $S \cup Ax_\varphi$ is satisfiable as well. By
compactness of first-order logic $S \cup Ax_\varphi$ is
satisfiable, hence $S$ is satisfiable over $K$.

But then take any $n$-ary evaluation $\Sigma_\varphi$-point
$(N',c', \bar{d'}_n)$ satisfying $S$ such that $N' \in K$ and this
will be an evaluation point for which we will have both $N', c',
\bar{d'}_n \models ST(i_{(N',c', \bar{d'}_n)}(\bar{w}_n), x)$ by
choice of $i_{(N',c', \bar{d'}_n)}$ and $N',c', \bar{d'}_n \models
\neg ST(i_{(N',c', \bar{d'}_n)}(\bar{w}_n), x)$ by the
satisfaction of $S$, a contradiction.

Therefore, for any given $n$-ary evaluation $\Sigma_\varphi$-point
$(M, a, \bar{b}_n)$ satisfying $ KC(\varphi(x,\bar{w}_n))$ such
that $M \in K$ we can choose an $n$-ary evaluation
$\Sigma_\varphi$-point $(N,c, \bar{d}_n)$ such that $N \in K$, $N,
c, \bar{d}_n\models \varphi(x, \bar{w}_n)$ and $IT_\varphi(N, c,
\bar{d}_n) \subseteq IT_\varphi(M,a, \bar{b}_n)$. Then, reasoning
exactly as in the proof of Theorem \ref{L:main}, we conclude that
$M, a, \bar{b}_n \models \varphi(x, \bar{w}_n)$. Therefore,
$\varphi(x, \bar{w}_n)$ in fact $K$-follows from $KC(\varphi(x,
\bar{w}_n))$. \end{proof}

\begin{theorem}\label{L:int-final}
Let $K$ be a class of first-order models such that for any finite
$\Theta$ the class $K(\Theta)$ is first-order axiomatizable. Then
a formula $\varphi(x, \bar{w}_n)$ is $K$-invariant with respect to
asimulations iff it is $K$-equivalent to a standard
$x$-translation of an intuitionistic formula.
\end{theorem}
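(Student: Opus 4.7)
The plan is to derive Theorem \ref{L:int-final} as an immediate corollary of Theorem \ref{L:int-main} together with Corollary \ref{L:c-inv}, in direct parallel to how Theorem \ref{L:final} was obtained from Theorem \ref{L:main} and Corollary \ref{L:c-inv} in the unrestricted setting. One direction, namely \emph{$K$-invariance $\Rightarrow$ $K$-equivalence}, is precisely the content of Theorem \ref{L:int-main} and requires no further work, since the hypothesis on $K$ is exactly the same.

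For the converse direction I would argue as follows. Suppose $\varphi(x,\bar{w}_n)$ is $K$-equivalent to some standard $x$-translation $ST(i(\bar{w}_n),x)$. Let $\Theta$ with $\Sigma_\varphi\subseteq\Theta$ be given, let $(M,a,\bar{b}_n)$ and $(N,c,\bar{d}_n)$ be $n$-ary evaluation $\Theta$-points with $M,N\in K$, suppose there is an $\langle (M,a,\bar{b}_n),(N,c,\bar{d}_n)\rangle$-asimulation $A$, and assume $M,a,\bar{b}_n\models\varphi(x,\bar{w}_n)$. Since $M\in K$, $K$-equivalence yields $M,a,\bar{b}_n\models ST(i(\bar{w}_n),x)$. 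By Corollary \ref{L:c-inv}, $ST(i(\bar{w}_n),x)$ is invariant with respect to asimulations on \emph{all} first-order models, so $A$ transports this truth across: $N,c,\bar{d}_n\models ST(i(\bar{w}_n),x)$. Finally, since $N\in K$, a second application of $K$-equivalence gives $N,c,\bar{d}_n\models\varphi(x,\bar{w}_n)$. This is exactly the definition of $K$-invariance with respect to asimulations.

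There is essentially no obstacle: the whole content of the theorem has already been packed into Theorem \ref{L:int-main} (for the nontrivial direction) and into Corollary \ref{L:c-inv} (for the easy direction). The only point worth flagging is that in the easy direction one uses the \emph{unrestricted} asimulation invariance of $ST(i(\bar{w}_n),x)$ supplied by Corollary \ref{L:c-inv}, rather than a $K$-relative version; this is legitimate because any $K$-asimulation between models in $K$ is a fortiori an asimulation in the sense of Definition \ref{D:asim}. Thus the proof is a two- or three-line assembly, analogous to the remark preceding Theorem \ref{L:final}.
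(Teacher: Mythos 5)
Your proof is correct and follows essentially the same route as the paper: the left-to-right direction is delegated to Theorem \ref{L:int-main}, and the converse transfers truth via $K$-equivalence at $M$, then applies the unrestricted asimulation invariance of $ST(i(\bar{w}_n),x)$ from Corollary \ref{L:c-inv}, then applies $K$-equivalence again at $N$. Your explicit remark that the asimulation between models in $K$ is an ordinary asimulation (so Corollary \ref{L:c-inv} applies) just spells out a step the paper leaves implicit.
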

\begin{proof} From left to right our theorem follows from
Theorem \ref{L:int-main}. In the other direction, assume that
$\varphi(x, \bar{w}_n)$ is $K$-equivalent to $ST(i(\bar{w}_n),x)$
and assume that for some $\Theta$ such that $\Sigma_\varphi
\subseteq \Theta$, some $n$-ary evaluation $\Theta$-points $(M, a,
\bar{b}_n)$ and $(N, c, \bar{d}_n)$ such that $M,N \in K$, and
some $\langle (M, a, \bar{b}_n),(N, c,
\bar{d}_n)\rangle$-asimulation $A$ we have $M, a, \bar{b}_n
\models \varphi(x, \bar{w}_n)$. Then, by Corollary \ref{L:c-inv}
we have $N, c, \bar{d}_n \models ST(i(\bar{w}_n),x)$, but since
$ST(i(\bar{w}_n),x)$ is $K$-equivalent to $\varphi(x, \bar{w}_n)$
and $N$ is in $K$, we also have $N, c, \bar{d}_n \models\varphi(x,
\bar{w}_n)$. Therefore, $\varphi(x, \bar{w}_n)$ is $K$-invariant
with respect to asimulations. \end{proof}

One obvious instantiation for $K$ would be the class of all
\emph{intuitionistic} models which are normally viewed as intended
models for intuitionistic predicate logic within the framework of
Kripke semantics. A first-order axiomatization for $K(\Theta)$
would be $RT \cup Mon \cup ER \cup Type$, where:
\begin{align*}
&RT = \{\,\forall yR(y,y), \forall yzw((R(y,z) \wedge R(z,w)) \to
R(y,w))\,\};\\
&Mon = \{\,\forall yz\bar{w}_n((P(y, \bar{w}_n) \wedge R(y,z)) \to
P(z, \bar{w}_n))\mid P \in \Theta \setminus \{\,R\,\}\,\};\\
&ER = \{\,\forall x(\exists yE(x,y) \leftrightarrow \neg\exists
yE(y,x)), \forall xy(R(x,y) \to \exists zw(E(x,z) \wedge
E(y,w)))\,\};\\
&Type = \{\,\forall y\bar{z}_n(P(y,\bar{z}_n) \to
\bigwedge^n_{i=1}(E(y,z_i))\mid P \in \Theta \setminus
\{\,R\,\}\,\}.
\end{align*}

Another instantiation for $K$ might be, e.g. the class of
\emph{intuitionistic models with constant domains}. In this case,
if $R^2, E^2 \in \Theta$, a first-order axiomatization for
$K(\Theta)$ is given by $RT \cup Mon \cup ER \cup Type \cup
\{\,CD\,\}$, where
\[
CD = \forall x(\exists yE(y,x) \to \forall yE(y,x)).
\]

Thus our Theorem \ref{L:int-final} yields, among others, a simple
equivalence criterion for these two particular classes of models.

\section{Conclusion and further research}\label{S:final}

Theorems \ref{L:param}, \ref{L:final}, and \ref{L:int-final}
proved above show that the general idea of asimulation for
intuitionistic propositional logic is a faithful analogue of the
idea of world-object bisimulation for modal predicate logic in
many important respects. However, in the predicate case
differences from the corresponding notion of bisimulation are much
more conspicuous than in the propositional case. Thus, if we
introduced `asimulation games' corresponding to the propositional
version of asimulation defined in \cite{Ol} (the main difference
from propositional case being the absence of conditions
\eqref{E:c33} and \eqref{E:c44}) then, given the strength of
condition \eqref{E:c22} we would have these games
indistinguishable from bisimulation games on the segment beginning
from the first move of Duplicator. Every link between worlds
established by this player would have to be symmetrical and the
asymmetry of asimulation would be important only for the intial
pair of worlds.\footnote{This asymmetry would also possibly lead
to exclusion of some successors of the left world of the link from
the domain of the bisimulation game to follow.} This does not hold
in the predicate case. Here, depending on the strategy chosen by
Spoiler, the whole game might be played with the asymmetrical
links between sequences of world and objects; also asymmetry can
be reinstated after the players reach the first symmetrical link
in the game, and the direction of asymmetry can be switched by
moves of the players. All these features show that specific
features of intuitionistic logic can be actualized within the
setting of quantifiers and predicates only, while on the
propositional level one can find but mere rudiments and traces of
them.

One interesting further question lying beyond the scope of the
present paper is the status of the proofs presented above from the
viewpoint of intuitionistic philosophy. It is well-known that
$\omega$-saturated models whose existence is guaranteed by Lemma
\ref{L:ext} might turn out to be uncountable. Hence our proof
might be viewed by a hardcore intuitionist as having no sense at
all. As it happens, there is a way to give another proof of our
main result that looks more favorable to an intuitionistic eye.
This proof uses countable models only and employs the notion of
recursive saturation instead of saturation \emph{simpliciter}.
However, this variant of proof is also a little bit less clear and
more indirect, so we postpone its publication to another occasion.

}
\end{document}